\newcommand{\inlineitem}[1][]{%
\ifnum\enit@type=\tw@
    {\descriptionlabel{#1}}
  \hspace{\labelsep}%
\else
  \ifnum\enit@type=\z@
       \refstepcounter{\@listctr}\fi
    \quad\@itemlabel\hspace{\labelsep}%
\fi} \makeatother
\newcommand{\gz}{\zeta}
\newcommand{\gth}{\theta}
\newcommand{\gl}{\lambda}
\newcommand{\gp}{\pi}
\newcommand{\gs}{\sigma}
\newcommand{\gt}{\tau}
\newcommand{\gf}{\phi}
\newcommand{\Gd}{\Delta}
\newcommand{\Gom}{\Omega}
\newcommand{\subs}{\subset}
\newcommand{\bs}{\backslash}
\newcommand{\nin}{\notin}
\newcommand{\mbb}{\mathbb}
\newcommand{\mcl}{\mathcal}
\newcommand{\Stab}{\mathrm{Stab}}
\newcommand{\us}{\underset}
\newcommand{\os}{\overset}
\newcommand{\lra}{\longrightarrow}
\newcommand{\N}{\mbb N}
\newcommand{\Z}{\mbb Z}
\newcommand{\La}{\Leftarrow}
\newcommand{\Ra}{\Rightarrow}
\newcommand{\Llra}{\Longleftrightarrow}
\newcommand{\es}{\emptyset}
\newcommand{\equ}[1]{%
\begin{equation*}
#1
\end{equation*}
}
\newcommand{\equa}[1]{%
\begin{equation*}
\begin{aligned}
#1
\end{aligned}
\end{equation*}
}
\newcommand{\equan}[2]{%
\begin{equation}
\label{Eq:#1}
\begin{aligned}
#2
\end{aligned}
\end{equation}
}
\DeclareMathOperator{\Det}{Det}
\DeclareMathOperator{\sgn}{sgn}
\newtheorem{theorem}{Theorem}[section]
\newtheorem{lemma}[theorem]{Lemma}
\theoremstyle{definition}
\newtheorem{defn}[theorem]{Definition}
\theoremstyle{remark}
\newtheorem{remark}[theorem]{Remark}
\numberwithin{equation}{section}
\def\namedlabel#1#2{\begingroup
   \def\@currentlabel{#2}%
   \label{#1}\endgroup
}
\begin{document}
\title[Point Arrangements and Stratification of Totally Nonzero Grassmannian]{On the Generic Point Arrangements in Euclidean Space and Stratification of the Totally Nonzero Grassmannian}
\author[C.P. Anil Kumar]{Author: C.P. Anil Kumar*}
\address{Post Doctoral Fellow in Mathematics, Harish-Chandra Research Institute, Chhatnag Road, Jhunsi, Prayagraj (Allahabad)-211019, Uttar Pradesh, INDIA
}
\email{akcp1728@gmail.com}
\thanks{*The work is done while the author is a Post Doctoral Fellow at Harish-Chandra Research Institute, Allahabad.}
\subjclass[2010]{Primary: 14M15}
\keywords{Grassmannians, Projective Spaces, Point Arrangements}
\date{\sc \today}
\begin{abstract}
	
In this article, for positive integers $n\geq m\geq 1$, the parameter spaces for the isomorphism classes of the generic point arrangements of cardinality $n$, and the antipodal point arrangements of cardinality $2n$ in the Eulidean space $\mbb{R}^m$ are described using the space of totally nonzero Grassmannian $Gr^{tnz}_{mn}(\mbb{R})$. A stratification $\mcl{S}^{tnz}_{mn}(\mbb{R})$ of the totally nonzero Grassmannian $Gr^{tnz}_{mn}(\mbb{R})$ is mentioned and the parameter spaces are respectively expressed as quotients of the space $\mcl{S}^{tnz}_{mn}(\mbb{R})$ of strata under suitable actions of the symmetric group $S_n$ and the semidirect product group $(\mbb{R}^*)^n\rtimes S_n$. The cardinalities of the space $\mcl{S}^{tnz}_{mn}(\mbb{R})$ of strata and of the parameter spaces $S_n\bs \mcl{S}^{tnz}_{mn}(\mbb{R}), ((\mbb{R}^*)^n\rtimes S_n)\bs \mcl{S}^{tnz}_{mn}(\mbb{R})$ are enumerated in dimension $m=2$. Interestingly enough, the enumerated value of the isomorphism classes of the generic point arrangements in the Euclidean plane is expressed in terms of the number theoretic Euler-totient function. The analogous enumeration questions are still open in higher dimensions for $m\geq 3$.
\end{abstract}
\maketitle
\section{\bf{Introduction}}
The stratification of Grassmannians $Gr_{mn}(\mbb{R})$ is an interesting topic of study. There are various types of stratifications of different subsets of Grassmannians. A stratification is a decomposition of the Grassmannian or a subset of the Grassmannian into various strata where each stratum satisfies some nice properties. Some of the decompositions that exist in the literature are:
\begin{enumerate}
\item the decomposition of the Grassmannian $Gr_{mn}(\mbb{R})$ into Schubert cells \linebreak $\{\Gom_{\gl}(\mbb{R})\mid \gl\subseteq (n-m)^m\}$ indexed by partitions $\gl\subseteq (n-m)^m$, 
\item the decomposition of the Grassmannian $Gr_{mn}(\mbb{R})$ into matroid strata \linebreak $\{\mcl{S}_{\mcl{M}}(\mbb{R})\mid \mcl{M} \text{ a realizable matroid of rank }m\}$ also known as Gelfand-Serganova strata labelled by some matroids $\mcl{M}$ called realizable matroids,
\item the decomposition of the totally nonnegative Grassmannian $Gr^{tnz}_{mn}(\mbb{R})$ into totally nonnegative Grassmann cells $\{\mcl{S}^{tnn}_{\mcl{M}}(\mbb{R})=\mcl{S}_{\mcl{M}}(\mbb{R})\cap Gr^{tnn}_{mn}(\mbb{R})\mid \mcl{M} \text{ a realizable matroid of rank }m \text{ such that }\mcl{S}_{\mcl{M}}(\mbb{R})\cap Gr^{tnn}_{mn}(\mbb{R})\neq \es\}$ obtained from the matroid strata $\mcl{S}_{\mcl{M}}(\mbb{R})$ for various $\mcl{M}$.
\end{enumerate}
Moreover in cases (1) and (3), the topological structure of a cell/stratum is homeomorphic to an open ball of appropriate dimension, that is, they are actually cells.
In case (1), $\Gom_{\gl}(\mbb{R}) \cong \mbb{R}^{\mid \gl\mid}$.  In case (3), $\mcl{S}^{tnn}_{\mcl{M}}\cong (\mbb{R}^+)^d$ for some suitable $d\geq 0$. In case (2), the geometric structure of a matroid stratum $\mcl{S}_{\mcl{M}}(\mbb{R})$ can be highly nontrivial as N.~E.~Mn\"{e}v~\cite{MR0970093} has shown that, it can be as complicated as essentially any algebraic variety. 

The enumeration of the number of Schubert cells in the decomposition of $Gr_{mn}(\mbb{R})$ in case (1) is $\binom{n}{m}$  a binomial coefficient and the enumeration of the totally nonnegative Grassmann cells in case (3) is related to Eulerian numbers (refer Section 23 in A.~Postnikov~\cite{arXiv:0609764} and also L.~K.~Williams~\cite{MR2102660}).

In this article, we describe the stratification $\mcl{S}^{tnz}_{mn}(\mbb{R})$ of another subset of the Grassmannian namely, the totally nonzero Grassmannian $Gr^{tnz}_{mn}(\mbb{R})$. We also enumerate the number of strata in $\mcl{S}^{tnz}_{2n}(\mbb{R})$, that is, for $m=2$ to be equal to $2^{n-2}(n-1)!$ for $n\geq 2$. The enumeration question is still open for dimensions $m>2$. Also we relate the totally nonzero Grassmannian $Gr^{tnz}_{mn}(\mbb{R})$ to the generic point arrangements of cardinality $n$ and the antipodal point arrangements of cardinality $2n$ in Euclidean space $\mbb{R}^m$ and describe the parameter spaces for the isomorphism classes of the generic point arrangements and the antipodal point arrangements via certain group actions on the space $\mcl{S}^{tnz}_{mn}(\mbb{R})$ of strata. Again we enumerate the parameter spaces $S_n\bs \mcl{S}^{tnz}_{2n}(\mbb{R}), ((\mbb{R}^*)^n\rtimes S_n)\bs \mcl{S}^{tnz}_{2n}(\mbb{R})$ for $m=2$. The question of the enumeration of the set $S_n\bs \mcl{S}^{tnz}_{2n}(\mbb{R})$ has an interesting answer. The definitions and details are mentioned in the next and later sections.  

\section{\bf{Point Arrangements in Euclidean Spaces}}
\begin{defn}[A Point Arrangement, An Generic Point Arrangement, An Antipodal Point Arrangement]
Let $n\geq m \geq 1$ be two positive integers. Any finite subset $S\subs \mbb{R}^m$ is a said to be a point arrangement. A point arrangement $S=\{v_1,v_2,\cdots,v_n\}$ $\subs \mbb{R}^m$  is said to be generic if any subset $I\subs S$ of cardinality at most $m$ is a linearly independent set. A point arrangement $S=\{v_1,v_2,\cdots,v_{2n}\}\subs \mbb{R}^m$ is said to be an antipodal point arrangement if for every $1\leq i\leq 2n$ there exists unique $1\leq j=\gs(i)\leq 2n$ such that $v_i=tv_j$ for some real number $t<0$ and the subset $I$ of cardinality $n\geq m$ consisting of one representative from each of the sets $\{v_i,v_{\gs(i)}\}$ is a generic point arrangement in $\mbb{R}^m$.  Conventionally, we choose $v_{\gs(i)}=-v_i$ for every $1\leq i\leq 2n$ and we call the subset $\{v_i,v_{\gs(i)}\}$ a set consisting of an antipodal pair. 
\end{defn}
\begin{defn}[Isomorphic Generic Point Arrangements, Isomorphic Antipodal Point Arrangements]
We say two generic point arrangements $S_1,S_2$ in $\mbb{R}^m$ are isomorphic if there is a set bijection $\gs:S_1\lra S_2$ such that for any subset $C=\{v_{i_1},v_{i_2},\cdots,v_{i_m},v_{i_{m+1}}\}\subseteq S_1$ of cardinality $(m+1)$ if $v_{i_{m+1}}=\us{j=1}{\os{m}{\sum}}a_jv_{i_j}$ and if $\gs(v_{i_{m+1}})=\us{j=1}{\os{m}{\sum}}b_j\gs(v_{i_j})$ then for each $1\leq j\leq m,\ $  $a_jb_j>0$, that is, $\sgn(a_j)=\sgn(b_j)\neq 0$ where $\sgn$ is the signum function.

We say two conventional antipodal point arrangements $S_1,S_2$ are isomorphic if there exists a bijection $\gs:S_1\lra S_2$ such that $\gs(-v)=-\gs(v)$ for every $v\in S_1$ and if for every subset $C=\{v_{i_1},v_{i_2},\cdots,v_{i_m},v_{i_{m+1}}\}\subseteq S_1$ of cardinality $(m+1)$ not containing any antipodal pair, if $v_{i_{m+1}}=\us{j=1}{\os{m}{\sum}}a_jv_{i_j}$ and if $\gs(v_{i_{m+1}})=\us{j=1}{\os{m}{\sum}}b_j\gs(v_{i_j})$ then $a_j>0$ for all $1\leq j\leq m$ if and only if  $b_j>0$ for all $1\leq j\leq m$.
\end{defn}
\begin{remark}
Let $S$ be a generic point arrangement in $\mbb{R}^m$. Let $A\in GL_m(\mbb{R})$. Then $T=AS=\{Av\mid v\in S\}$ is also a generic point arrangement in $\mbb{R}^m$. Moreover $S$ is isomorphic to $T$. Similarly if $S$ is an antipodal point arrangement in $\mbb{R}^m$ then $T$ is also an antipodal point arrangement in $\mbb{R}^m$ and $S$ is isomorphic to $T$.
\end{remark}
\section{\bf{Grassmannians}}
For $n\geq m\geq 1$, the Grassmannian $Gr_{mn}(\mbb{R})$ is the collection of $m$-dimensional subspaces $V\subseteq \mbb{R}^n$. It can be presented as the quotient \equ{Gr_{mn}(\mbb{R})=GL_m(\mbb{R})\bs Mat^*_{mn}(\mbb{R}),} where $Mat^*_{mn}(\mbb{R})$ is the space of $(m\times n)$-matrices of rank $m$. Here we assume that the subspace $V$ associated with a $(m\times n)$-matrix $M$ is spanned by the row vectors of $M$.

\subsection{\bf{Pl\"{u}cker coordinates}} 
For a $(m\times n)$-matrix $M$ and a $m$-element subset $I\subseteq [n]=\{1,2,\cdots,n\}$, let $M_I$ denote the $(m\times m)$-submatrix of $M$ in the column set $I$, and let $\Gd_I(M):=\Det(M_I)$ denote the maximal minor of $M$. If we multiply $M$ by $A\in GL_m(\mbb{R})$ on the left, all minors $\Gd_I(M)$ are rescaled by the same factor $\Det(A)$. If $M=(m_{ij})$ is in $I$-echelon form then $M_I=Id_m$ and $m_{ij}=+\Gd_{I\bs \{i\}\cup\{j\}}$ or $m_{ij}=-\Gd_{I\bs \{i\}\cup\{j\}}$ and the sign can be determined. Thus the $(\Gd_I)_{I\in \binom{[n]}{m}}$ form projective coordinates of the Grassmannian $Gr_{mn}(\mbb{R})$ called the Pl\"{u}cker coordinates and the map $M\lra (\Gd_I)_{I\in \binom{[n]}{m}}$ induces the Pl\"{u}cker embedding $Gr_{mn}(\mbb{R}) \hookrightarrow \mbb{RP}^{\binom{n}{m}-1}$ of the Grassmannian into the projective space. The image of the Grassmannian $Gr_{mn}(\mbb{R})$ under the Pl\"{u}cker embedding is the algebraic subvariety in $\mbb{RP}^{\binom{n}{m}-1}$ given by the Grassmann-Pl\"{u}cker relations:
\equ{\Gd_{(i_1,i_2,\cdots,i_m)}.\Gd_{(j_1,j_2,\cdots,j_m)}=\us{s=1}{\os{m}{\sum}}\Gd_{(j_s,i_2,\cdots,i_m)}.\Gd_{(j_1,j_2,\cdots,j_{s-1},i_1,j_{s+1},j_{s+2},\cdots,j_m)},}
for any $i_1,i_2,\cdots,i_m,j_1,j_2,\cdots,j_m\in [n]$. Here we assume that $\Gd_{(i_1,i_2,\cdots,i_m)}$ (labelled by an ordered sequence rather than a subset) equals to $\Gd_{\{i_1,i_2,\cdots,i_m\}}$ if $i_1<i_2<\cdots<i_m$ and $\Gd_{(i_1,i_2,\cdots,i_m)}=(-1)^{sign(w)}\Gd_{(i_{w(1)},i_{w(2)},\cdots,i_{w(m)})}$ for all $w\in S_m$.

\subsection{\bf{The Totally Nonzero Grassmannian}}

\begin{defn}
	Let us define the totally nonzero Grassmannian $Gr^{tnz}_{mn}(\mbb{R})\subs Gr_{mn}(\mbb{R})$ as the quotient $Gr^{tnz}_{mn}(\mbb{R})=GL_m(\mbb{R})\bs Mat^{tnz}_{mn}(\mbb{R})$ where $Mat^{tnz}_{mn}(\mbb{R})$ is the set of real $(m\times n)$-matrices $M$ of rank $m$ with all maximal minors $\Gd_I(M)\neq 0$.
	If $M=(v_1,v_2,\cdots,v_n)$ where $v_i$ is the $i^{th}$-column of the matrix $M$ for $1\leq i\leq n$ then the set $S=\{v_i\mid 1\leq i\leq n\}$ is a generic point arrangement. Moreover if $N=(w_1,w_2,\cdots,w_n)$ is another matrix representing the same element $V$ of the Grassmannian then there exists a matrix $A\in GL_m(\mbb{R})$ such that $AM=N$. So the set $T=\{w_1,w_2,\cdots,w_n\}=AS$ is a generic point arrangement isomorphic to $S$. Hence each element $V=GL_m(\mbb{R}).M\in Gr_{mn}(\mbb{R})$ represents an isomorphism class of a generic point arrangement.
\end{defn}
\subsection{\bf{Stratification of the Totally Nonzero Grassmannian}}
\begin{defn}
	Let $\mcl{C}\subseteq \binom{[n]}{m}$ be a certain collection of $m$-subsets of $\{1,2,\cdots,n\}$. Define the stratum $\mcl{S}^{tnz}_{\mcl{C}}(\mbb{R})=\{GL_m(\mbb{R}).M\in Gr^{tnz}_{mn}(\mbb{R}) \mid \text{ either } \Gd_I(M)>0 \text{ for all }$ $I\in \mcl{C} \text{ and }\Gd_I(M)$ $<0 \text{ for all } I\nin \mcl{C} \text{ or }
	\Gd_I(M)<0$ for all $I\in \mcl{C} \text{ and }\Gd_I(M)>0$ for all $I\nin \mcl{C} \}$. Note $\mcl{S}^{tnz}_{\mcl{C}}(\mbb{R})=\mcl{S}^{tnz}_{\binom{[n]}{m}\bs \mcl{C}}(\mbb{R})$. Let $\mcl{S}^{tnz}_{mn}(\mbb{R})=\{\mcl{S}^{tnz}_{\mcl{C}}(\mbb{R})\mid \mcl{C}\subseteq \binom{[n]}{m},\mcl{S}^{tnz}_{\mcl{C}}(\mbb{R})\neq \es\}$ denote the collection of nonempty strata. 
\end{defn}
\begin{remark}
	Using Pl\"{u}cker relations, we can produce a collection $\mcl{C}$ whose stratum $\mcl{S}^{tnz}_{\mcl{C}}(\mbb{R})$ is empty.
\end{remark}
\begin{remark}
	The description of the totally positive Grassmannian and the totally nonnegative Grassmannian and their stratifications are given in A.~Postnikov~\cite{arXiv:0609764}.
\end{remark}
\subsection{\bf{Some Group Actions on Strata}}
\begin{defn}
	We define the action of $(\mbb{R}^*)^n$ on $\mcl{S}^{tnz}_{mn}(\mbb{R})$. Let $\mcl{S}^{tnz}_{\mcl{C}}(\mbb{R})$ be a non-empty stratum, that is, $\mcl{S}^{tnz}_{\mcl{C}}(\mbb{R})\in \mcl{S}^{tnz}_{mn}(\mbb{R})$ for some $\mcl{C}\in \binom{[n]}{m}$. Let $(t_1,t_2,\cdots,t_n)\in (\mbb{R}^*)^n$ and $GL_m(\mbb{R}).M\in \mcl{S}^{tnz}_{\mcl{C}}(\mbb{R})$ where $M=(v_1,v_2,\cdots,v_n)$ with $v_i\in \mbb{R}^m$ a column vector of $M$ for $1\leq i\leq n$. Define $N=(t_1v_1,t_2v_2,\cdots,t_nv_n)$ and define the action $(t_1,t_2,\cdots,t_n)\bullet GL_m(\mbb{R}).M=GL_m(\mbb{R}).N$. We observe that $\Gd_I(N)=\bigg(\us{i\in I}{\prod}t_i\bigg)\Gd_I(M)$. Now define the collection $\mcl{D}=\big\{I\in \binom{[n]}{m}\text{ such that }\Gd_I(N)>0\big\}$. Then the action of $(\mbb{R}^*)^n$ on the strata is given by \equ{(t_1,t_2,\cdots,t_n)\bullet\mcl{S}^{tnz}_{\mcl{C}}(\mbb{R})=(t_1,t_2,\cdots,t_n).\mcl{S}^{tnz}_{\binom{[n]}{m}\bs \mcl{C}}(\mbb{R})=\mcl{S}^{tnz}_{\mcl{D}}(\mbb{R})=\mcl{S}^{tnz}_{\binom{[n]}{m}\bs \mcl{D}}(\mbb{R}).}
\end{defn}
\begin{defn}
	We define the action of $(\mbb{R}^+)^n$ on $\mcl{S}^{tnz}_{mn}(\mbb{R})$. This action is the restriction of the action of $(\mbb{R}^*)^n$. Here in fact we observe that $(t_1,t_2,\cdots,t_n)\bullet\mcl{S}^{tnz}_{\mcl{C}}(\mbb{R})=\mcl{S}^{tnz}_{\mcl{C}}(\mbb{R})$. Every stratum is a fixed point for this action. So on the strata this action is trivial.
\end{defn}
\begin{defn}
	We define the action of $S_n$ on $\mcl{S}^{tnz}_{mn}(\mbb{R})$. Let $P$ be an $(n\times n)$-permutation matrix in $S_n$. Let $\mcl{S}^{tnz}_{\mcl{C}}(\mbb{R})$ be a non-empty stratum and $GL_m(\mbb{R}).M\in \mcl{S}^{tnz}_{\mcl{C}}(\mbb{R})$. Then define $P\bullet (GL_m(\mbb{R}).M)=GL_m(\mbb{R}).MP^{-1}$. If the permutation matrix $P$ is denoted by $\gs:[n]\lra [n]$ then for $1\leq i\leq n, P^{-1}(e^n_i)^t=(e^n_{\gs^{-1}(i)})^t$ where $e_i^n$ is the standard $n$-dimensional row vector which has entries $1$ at the $i^{th}$ place and zeroes elsewhere and $t$ stands for transpose. If $M=(v_1,v_2,\cdots,v_n)$ then $MP^{-1}=(v_{\gs^{-1}(1)},v_{\gs^{-1}(2)},\cdots,v_{\gs^{-1}(n)})$. This action gives rise to a well-defined action on the strata, that is, if $GL_m(\mbb{R}).M$,  $GL_m(\mbb{R}).N$ belong to the same stratum $\mcl{S}^{tnz}_{\mcl{C}}(\mbb{R})$ then $GL_m(\mbb{R}).MP^{-1}, GL_m(\mbb{R}).NP^{-1}$ belong to the same stratum. So
	\equ{\gs\bullet \mcl{S}^{tnz}_{\mcl{C}}(\mbb{R})=\mcl{S}^{tnz}_{\mcl{D}}(\mbb{R}) }
	where $GL_m(\mbb{R}).MP^{-1}\in \mcl{S}^{tnz}_{\mcl{D}}(\mbb{R})$.
\end{defn}
\begin{defn}
	We define the semi-direct product group $(\mbb{R}^*)^n\rtimes S_n$ as follows.
	Let $(s_1,s_2,\cdots,s_n),(t_1,t_2,\cdots,t_n)\in (\mbb{R}^*)^n$ and $\gs,\gt\in S_n$. Define \equ{((s_1,s_2,\cdots,s_n),\gt).((t_1,t_2,\cdots,t_n),\gs)=((s_{\gs(1)}t_1,s_{\gs(2)}t_2,\cdots,s_{\gs(n)}t_n),\gt\gs).}
	Let $\mcl{S}^{tnz}_{\mcl{C}}(\mbb{R})$ be a non-empty stratum and $GL_m(\mbb{R}).M\in \mcl{S}^{tnz}_{\mcl{C}}(\mbb{R})$. Now define the action of $(\mbb{R}^*)^n\rtimes S_n$ on the element $GL_m(\mbb{R}).M \in Gr_{mn}^{tnz}(\mbb{R})$ as 
	\equ{((t_1,t_2,\cdots,t_n),\gs)\bullet (GL_m(\mbb{R}).M)=GL_m(\mbb{R}).N} with 
	$N=(t_{\gs^{-1}(1)}v_{\gs^{-1}(1)},t_{\gs^{-1}(2)}v_{\gs^{-1}(2)},\cdots,t_{\gs^{-1}(n)}v_{\gs^{-1}(n)})$ where $M=(v_1,v_2,\cdots,$ $v_n)$.
	This action gives rise to an action on the space $\mcl{S}^{tnz}_{mn}(\mbb{R})$ of strata as \equ{((t_1,t_2,\cdots,t_n),\gs)\bullet \mcl{S}^{tnz}_{\mcl{C}}(\mbb{R})=\mcl{S}^{tnz}_{\mcl{D}}(\mbb{R})}
	where $GL_m(\mbb{R}).N\in \mcl{S}^{tnz}_{\mcl{D}}(\mbb{R})$.
\end{defn}
\begin{theorem}
\label{theorem:SNAction}
Consider the action of $S_n$ on the space $\mcl{S}^{tnz}_{mn}(\mbb{R})$ of strata. Let $\mcl{C},\mcl{D}$ be two collections of $m$-subsets of $[n]$ such that $\mcl{S}^{tnz}_{\mcl{C}}(\mbb{R}),\mcl{S}^{tnz}_{\mcl{D}}(\mbb{R})$ are non-empty. Then $\mcl{S}^{tnz}_{\mcl{C}}(\mbb{R})$ and $\mcl{S}^{tnz}_{\mcl{D}}(\mbb{R})$ are in the same orbit for the action of $S_n$ if and only if for any two matrices $M=(v_1,v_2,\cdots,v_n),N=(w_1,w_2,\cdots,w_n)\in Mat^{tnz}_{mn}(\mbb{R})$ such that $GL_m(\mbb{R}).M\in \mcl{S}^{tnz}_{\mcl{C}}(\mbb{R}), GL_m(\mbb{R}).N\in \mcl{S}^{tnz}_{\mcl{D}}(\mbb{R})$ the sets $S=\{v_1,v_2,\cdots,v_n\}$ and $T=\{w_1,w_2,\cdots,$ $w_n\}$ are isomorphic generic point arrangements.
\end{theorem}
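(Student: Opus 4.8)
The plan is to reduce both directions to a single combinatorial observation: for a generic arrangement the signs of the coefficients appearing in the $(m+1)$-point linear dependences are completely determined by the signs of the maximal minors $\Gd_I$, and conversely. Fix an $(m+1)$-subset $\{i_1<i_2<\cdots<i_{m+1}\}\sbq [n]$ and write $v_{i_{m+1}}=\sum_{j=1}^m a_j v_{i_j}$. Cramer's rule gives
\[
a_j=\frac{\Det(v_{i_1},\ldots,v_{i_{j-1}},v_{i_{m+1}},v_{i_{j+1}},\ldots,v_{i_m})}{\Det(v_{i_1},\ldots,v_{i_m})}=(-1)^{m-j}\,\frac{\Gd_{I_j}(M)}{\Gd_{I_0}(M)},
\]
where $I_0=\{i_1,\ldots,i_m\}$ and $I_j=\{i_1,\ldots,i_{m+1}\}\bs\{i_j\}$; the combinatorial sign $(-1)^{m-j}$ records the number of transpositions needed to sort the columns into increasing order and depends only on the index set. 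Hence $\sgn(a_j)=(-1)^{m-j}\sgn\big(\Gd_{I_j}(M)\big)\sgn\big(\Gd_{I_0}(M)\big)$, which is invariant under a global sign flip of all Pl\"{u}cker coordinates and therefore is an invariant of the stratum $\mcl{S}^{tnz}_{\mcl{C}}(\mbb{R})$ containing $GL_m(\mbb{R}).M$. I would record this as a preliminary lemma and lean on it throughout.

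For the forward implication, suppose $\gs\in S_n$ satisfies $\gs\bullet\mcl{S}^{tnz}_{\mcl{C}}(\mbb{R})=\mcl{S}^{tnz}_{\mcl{D}}(\mbb{R})$. Given $M=(v_1,\ldots,v_n)$ with $GL_m(\mbb{R}).M\in\mcl{S}^{tnz}_{\mcl{C}}(\mbb{R})$, the permuted matrix $MP^{-1}=(v_{\gs^{-1}(1)},\ldots,v_{\gs^{-1}(n)})$ lies in $\mcl{S}^{tnz}_{\mcl{D}}(\mbb{R})$, the same stratum as $N=(w_1,\ldots,w_n)$. Because membership in a stratum fixes every Pl\"{u}cker sign up to a single global flip, the preliminary lemma shows that $MP^{-1}$ and $N$ exhibit identical coefficient signs in every $(m+1)$-dependence computed in the labels $1,\ldots,n$. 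The bijection $\gf:S\lra T$ defined by $\gf(v_{\gs^{-1}(i)})=w_i$ is then an isomorphism of generic point arrangements, and since the argument used only stratum membership it goes through verbatim for any choice of representatives $M,N$, which is exactly what the statement demands.

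For the converse, assume that for some (equivalently, by the above invariance, any) representatives the arrangements $S$ and $T$ are isomorphic via $\gf$, and let $\gt\in S_n$ be the permutation with $\gf(v_i)=w_{\gt(i)}$. Aligning labels through $\gt$ turns $\gf$ into the position-wise correspondence between the permuted matrix $\ti M$, whose $k$-th column is $v_{\gt^{-1}(k)}$, and $N$; the isomorphism condition then says that $\ti M$ and $N$ have equal coefficient signs in every $(m+1)$-dependence. The main work is to upgrade this to equality of the full Pl\"{u}cker sign vectors up to a global flip. By the lemma the totality of coefficient signs is precisely the totality of relative signs $\sgn\big(\Gd_I/\Gd_J\big)$ for adjacent $m$-subsets $I,J$ (those with $\mid I\cap J\mid=m-1$); since the exchange graph on $\binom{[n]}{m}$, namely the Johnson graph $J(n,m)$, is connected for $n>m$, these pairwise relative signs propagate along paths to pin down the entire sign vector up to one overall sign, i.e. up to the identification $\mcl{C}\llra\binom{[n]}{m}\bs\mcl{C}$. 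Thus $GL_m(\mbb{R}).\ti M$ and $GL_m(\mbb{R}).N$ lie in the same stratum $\mcl{S}^{tnz}_{\mcl{D}}(\mbb{R})$, whence $\gt\bullet\mcl{S}^{tnz}_{\mcl{C}}(\mbb{R})=\mcl{S}^{tnz}_{\mcl{D}}(\mbb{R})$ and the two strata share an $S_n$-orbit. I expect this connectivity step, converting local coefficient-sign equalities into a global statement about the stratum, to be the crux; the remainder is careful bookkeeping with the permutation conventions of the $S_n$-action.
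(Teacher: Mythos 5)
Your proposal is correct, and it is built on the same core observation as the paper's proof --- that the signs of the coefficients in the $(m+1)$-point dependences and the relative signs of the Pl\"{u}cker coordinates determine each other --- but you execute it differently, and more completely. The paper's proof of the ``isomorphic $\Rightarrow$ same orbit'' direction normalizes $M$ by the $GL_m(\mbb{R})$-action into echelon form, so that the matrix entries of the remaining columns literally are (up to sign) the minors $\Gd_I$ with $\mid I\cap [m]\mid=m-1$; it then verifies sign agreement only for these ``adjacent to $[m]$'' subsets and asserts that ``the proof can be extended to all $m$-subsets of $[n]$'' without detail, and the reverse direction is handled by ``tracing back''. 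Your Cramer's rule lemma is exactly the coordinate-free form of that echelon-form observation, with the advantage that it makes both directions of the equivalence immediate consequences of a single stratum invariant; and your Johnson graph connectivity argument supplies precisely the propagation step (from relative signs on pairs $I,J$ with $\mid I\cap J\mid = m-1$ to the full sign vector up to a global flip) that the paper leaves as an assertion. Note that this propagation genuinely uses that every $\Gd_I\neq 0$ on $Gr^{tnz}_{mn}(\mbb{R})$, since intermediate signs along a path must square to $+1$; your argument respects this. You also handle the global flip ambiguity intrinsically (invariance of the products $\sgn(\Gd_I\Gd_J)$), where the paper invokes an explicit reflection $R\in GL_m(\mbb{R})$; these are equivalent. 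In short: same skeleton, but your write-up closes the gap the paper glosses over, at the cost of introducing the (standard) connectivity fact about the exchange graph on $\binom{[n]}{m}$.
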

\begin{proof}
($\La$) Let $S$ and $T$ be two isomorphic arrangements and $\gs:T\lra S$ be an isomorphism. Let $\gs(w_i)=v_{\gs(i)},1\leq i\leq n$. Let $M=(v_1,v_2,\cdots,v_n)$ and $N_1=(w_1,w_2,\cdots,w_n)P^{-1}=(w_{\gs^{-1}(1)},w_{\gs^{-1}(2)},\cdots,w_{\gs^{-1}(n)})$ where $P$ is a permutation matrix such that $P^{-1}(e^n_{i})^t=(e^n_{\gs^{-1}(i)})^t$. Then we prove that $GL_m(\mbb{R}).N_1\in \mcl{S}^{tnz}_{\mcl{C}}(\mbb{R})$ which is the stratum that contains $GL_m(\mbb{R}).M$.
First assume without loss of generality that $v_i=(e^m_i)^t=w_{\gs^{-1}(i)},1\leq i\leq n$. Then $\Det(v_1,v_2,\cdots,v_m)=\Det(w_{\gs^{-1}(1)},w_{\gs^{-1}(2)},\cdots,w_{\gs^{-1}(m)})=1$. Moreover for $i>m$ we have if $v_i=(x_{i1},x_{i2},\cdots,x_{im})^t$ and $w_{\gs^{-1}(i)}=(y_{i1},y_{i2},\cdots,y_{im})^t$ then $\sgn(x_{ij})=\sgn(y_{ij})$ for $1\leq j\leq m<i\leq n$. This follows because $\gs^{-1}:v_i\lra w_{\gs^{-1}(i)},1\leq i\leq n$ is an isomorphism. Hence $\sgn(\Det(v_{i_1},v_{i_2},\cdots,v_{i_{m-1}},v_i))=\sgn(\Det(w_{\gs^{-1}(i_1)},v_{\gs^{-1}(i_2)},\cdots,$ $v_{\gs^{-1}(i_{m-1})},w_{\gs^{-1}(i)}))$ for any $1\leq i_1<i_2<\cdots<i_{m-1}\leq m<i\leq n$. So the signs of the determinants agree for any $m$-subset $I$ of $[n]$ such  that $\mid I\cap [m]\mid =m-1$ assuming that the sign of the determinant agrees for the $m$-subset $[m]\subseteq [n]$. Now the proof can be extended to all $m$ subsets of $[n]$. So the elements $GL_m(\mbb{R}).N_1,GL_m(\mbb{R}).M$ are in the same stratum. Hence the elements  $GL_m(\mbb{R}).N,GL_m(\mbb{R}).M$ are in the same $S_n$-orbit  implying that the strata $\mcl{S}^{tnz}_{\mcl{C}}(\mbb{R}),\mcl{S}^{tnz}_{\mcl{D}}(\mbb{R})$ are in the same $S_n$-orbit..

$(\Ra)$ Now assume that the stratum $\mcl{S}^{tnz}_{\mcl{C}}(\mbb{R}),\mcl{S}^{tnz}_{\mcl{D}}(\mbb{R})$  are in the same $S_n$-orbit. Let $\gs\bullet \mcl{S}^{tnz}_{\mcl{C}}(\mbb{R})= \mcl{S}^{tnz}_{\mcl{D}}(\mbb{R})$. Let $GL_m(\mbb{R}).M\in \mcl{S}^{tnz}_{\mcl{C}}(\mbb{R}),GL_m(\mbb{R}).N\in \mcl{S}^{tnz}_{\mcl{D}}(\mbb{R})$ where $M=(v_1,v_2,\cdots,v_n),N=(w_1,w_2,\cdots,w_n), N\gs^{-1}=(w_{\gs^{-1}(1)},w_{\gs^{-1}(2)},\cdots,w_{\gs^{-1}(n)})$. If the signs of the coordinates $(\Gd_I(M))_{I\in \binom{[n]}{m}}$ and $(\Gd_I(N\gs^{-1}))_{I\in \binom{[n]}{m}}$ agree uniformly then we do not need to reflect $N$. Otherwise if the signs of the coordinates uniformly disagree then we reflect $N$ by using a reflection $R\in GL_m(\mbb{R})$ and consider $RN$ which does not change the isomorphism class of $N$. Now the previous proof can be traced back to obtain an isomorphism $\gs:T\lra S$. So the generic point arrangements $T$ and $S$ are isomorphic.
\end{proof}
\begin{theorem}
\label{theorem:SDGAction}
Consider the action of $(\mbb{R}^*)^n\rtimes S_n$ the space $\mcl{S}^{tnz}_{mn}(\mbb{R})$ of strata. Let $\mcl{C},\mcl{D}$ be two collections of $m$-subsets of $[n]$ such that $\mcl{S}^{tnz}_{\mcl{C}}(\mbb{R}),\mcl{S}^{tnz}_{\mcl{D}}(\mbb{R})$ are non-empty. Then $\mcl{S}^{tnz}_{\mcl{C}}(\mbb{R})$ and $\mcl{S}^{tnz}_{\mcl{D}}(\mbb{R})$ are in the same orbit for the action of $(\mbb{R}^*)^n\rtimes S_n$ if and only if for any two matrices $M=(v_1,v_2,\cdots,v_n),N=(w_1,w_2,\cdots,w_n)\in Mat^{tnz}_{mn}(\mbb{R})$ such that $GL_m(\mbb{R}).M\in \mcl{S}^{tnz}_{\mcl{C}}(\mbb{R}), GL_m(\mbb{R}).N\in \mcl{S}^{tnz}_{\mcl{D}}(\mbb{R})$ the sets $S=\{\pm v_1,\pm v_2,\cdots,\pm v_n\}$ and $T=\{\pm w_1,\pm w_2,\cdots,\pm w_n\}$ are isomorphic antipodal point arrangements.
\end{theorem}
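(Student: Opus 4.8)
The plan is to deduce this theorem from Theorem \ref{theorem:SNAction} by exploiting the precise dictionary between the two extra ingredients here---the $(\mbb{R}^*)^n$ factor and the antipodal structure. The central observation is that applying an element $((t_1,\ldots,t_n),\gs)\in (\mbb{R}^*)^n\rtimes S_n$ to a matrix $M=(v_1,\ldots,v_n)$ permutes its columns by $\gs$ and rescales them; since positive scalings act trivially on strata (by the definition of the $(\mbb{R}^+)^n$-action), only the \emph{signs} $\gep_i=\sgn(t_i)\in\{\pm 1\}$ matter at the level of strata. Flipping the sign of a column $v_i\mapsto -v_i$ leaves the antipodal pair $\{v_i,-v_i\}$ unchanged, so at the level of antipodal arrangements the semidirect action is exactly the freedom to reorder the $n$ antipodal pairs (the $\gs$-part) and to reselect a representative $\pm v_i$ from each pair (the sign-part). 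Thus I expect the whole statement to reduce to the generic case once this dictionary is made precise.

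The key lemma I would isolate is that a pair-respecting bijection between antipodal arrangements is an \emph{antipodal} isomorphism if and only if, for \emph{every} choice of one representative from each pair, the induced map on those $n$ representatives is a \emph{generic} isomorphism in the sense used in Theorem \ref{theorem:SNAction}. One direction is routine; the substance is that the weak condition ``all $a_j>0\Llra$ all $b_j>0$'' in the antipodal definition upgrades to the pointwise matching $\sgn(a_j)=\sgn(b_j)$. Here the antipodal structure does the work: for a fixed $(m+1)$-subset $C$ with relation $v_{i_{m+1}}=\us{j=1}{\os{m}{\sum}}a_j v_{i_j}$ and image relation $\gs(v_{i_{m+1}})=\us{j=1}{\os{m}{\sum}}b_j\gs(v_{i_j})$, replacing the representative $v_{i_j}$ by $-v_{i_j}$ (still legitimate, and sent to $-\gs(v_{i_j})$) flips both $a_j$ and $b_j$. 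Choosing the sign pattern $\get_j=\sgn(a_j)$ makes every modified coefficient $\get_j a_j=|a_j|>0$, whence the antipodal condition forces every $\get_j b_j>0$, i.e. $\sgn(b_j)=\get_j=\sgn(a_j)$ for all $j$. This is precisely the generic-isomorphism condition, and running the computation in reverse shows it is preserved under every representative choice.

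For the ($\La$) direction I would start from an antipodal isomorphism $\gs:T\lra S$, read off the induced permutation $\gt$ of the $n$ pairs and the signs recording $\gs(w_i)=\pm v_{\gt(i)}$, and apply the corresponding element of $(\mbb{R}^*)^n\rtimes S_n$. By the key lemma, $\gs$ restricted to the representatives $\{w_1,\ldots,w_n\}$ is a generic isomorphism onto $\{\gs(w_1),\ldots,\gs(w_n)\}$, whose matrix differs from $M$ only by the chosen signs and the permutation $\gt$; Theorem \ref{theorem:SNAction} then places the stratum of $N$ and that of $M$ in one $(\mbb{R}^*)^n\rtimes S_n$-orbit. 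For ($\Ra$) I would reverse this: given $((t_i),\gs)\bullet\mcl{S}^{tnz}_{\mcl{C}}(\mbb{R})=\mcl{S}^{tnz}_{\mcl{D}}(\mbb{R})$, reduce to $t_i=\gep_i\in\{\pm 1\}$, so that the sign-and-permuted matrix $M''$ with columns $\pm v_{\gs^{-1}(i)}$ lies in the same stratum $\mcl{D}$ as $N$. Theorem \ref{theorem:SNAction} (with the single stratum $\mcl{D}$ playing both roles) yields a generic isomorphism between $\{w_i\}$ and the columns of $M''$, which by the key lemma extends to an antipodal isomorphism $T\lra S$.

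The main obstacle will be the careful bookkeeping in the key lemma---verifying that a single sign vector simultaneously converts the coarse ``all-positive'' antipodal condition into pointwise sign matching across every $(m+1)$-subset at once, and dovetailing this with the normalization $v_i=(e^m_i)^t$ used in the proof of Theorem \ref{theorem:SNAction}. I would also need to carry over the global reflection trick from that proof: when the Pl\"ucker signs of $M''$ and $N$ disagree uniformly I reflect $N$ by some $R\in GL_m(\mbb{R})$, which changes neither the antipodal arrangement nor the stratum (recall $\mcl{S}^{tnz}_{\mcl{C}}(\mbb{R})=\mcl{S}^{tnz}_{\binom{[n]}{m}\bs\mcl{C}}(\mbb{R})$), thereby absorbing the overall sign ambiguity inherent in the definition of a stratum.
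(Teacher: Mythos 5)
Your proposal is correct, and it is worth noting that it supplies substantially more than the paper does: the paper's entire proof of Theorem~\ref{theorem:SDGAction} is the single sentence that the argument is ``similar to the proof of Theorem~\ref{theorem:SNAction} except here the group which is acting on the strata is $(\mbb{R}^*)^n\rtimes S_n$'' --- i.e., an instruction to re-run the normalization-and-sign-comparison argument with signed column permutations in place of column permutations. You instead treat Theorem~\ref{theorem:SNAction} as a black box and isolate the one piece of content that such a re-run would need but that the paper never states: the bridging lemma that a pair-respecting bijection of antipodal arrangements is an antipodal isomorphism if and only if every (equivalently, one) choice of representatives from the pairs induces a generic isomorphism. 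Your proof of that lemma is sound: the coefficients $a_j,b_j$ are nonzero by genericity, so $\get_j=\sgn(a_j)$ is a legitimate sign vector; replacing $v_{i_j}$ by $\get_j v_{i_j}$ yields an antipodal-pair-free $(m+1)$-subset to which the weak condition ``all $a_j>0\Llra$ all $b_j>0$'' applies, forcing $\sgn(b_j)=\sgn(a_j)$ pointwise; and pointwise sign matching is visibly stable under flipping any representative, which gives both the converse and the independence of the choice. With this lemma, your two reductions go through exactly as you sketch: for ($\La$), the matrix $M''$ whose columns are $\gs(w_i)=\gep_i v_{\gt(i)}$ lies in the $(\mbb{R}^*)^n\rtimes S_n$-orbit of $M$ at the level of strata, and Theorem~\ref{theorem:SNAction} links its stratum to $\mcl{S}^{tnz}_{\mcl{D}}(\mbb{R})$ inside the $S_n$-suborbit; for ($\Ra$), replacing each $t_i$ by $\sgn(t_i)$ (legitimate since $(\mbb{R}^+)^n$ acts trivially on strata) puts $M''$ and $N$ in the common stratum $\mcl{S}^{tnz}_{\mcl{D}}(\mbb{R})$, Theorem~\ref{theorem:SNAction} applied to that single stratum gives a generic isomorphism of the column sets, and the lemma upgrades it to an antipodal isomorphism of $T$ and $S$. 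What your organization buys is modularity and a precise localization of where the antipodal-pair structure does work; what the paper's implied route buys is only self-containedness, the same sign bookkeeping appearing inline in the coordinate computation. One small economy: the reflection trick you propose to carry over is already absorbed in the black-box use of Theorem~\ref{theorem:SNAction}, so in your formulation you do not need to repeat it.
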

\begin{proof}
The proof is similar to the proof of Theorem~\ref{theorem:SNAction} except here the group which is acting on the strata is $(\mbb{R}^*)^n\rtimes S_n$.  	
\end{proof}
\begin{theorem}
\label{theorem:Enumeration}
Let $n\geq m\geq 1$ be two positive integers.
\begin{enumerate}
\item  The isomorphism classes of the generic point arrangements of cardinality $n$ in the Euclidean space $\mbb{R}^m$ are in bijection with the  orbits in $S_n\bs\mcl{S}^{tnz}_{mn}(\mbb{R})$ under the action of $S_n$ on the space $\mcl{S}^{tnz}_{mn}(\mbb{R})$ of strata.
\item The isomorphism classes of the antipodal point arrangements of cardinality $2n$ in the Euclidean space $\mbb{R}^m$ are in bijection with the orbits in $((\mbb{R}^*)^n\rtimes S_n)\bs\mcl{S}^{tnz}_{mn}(\mbb{R})$ under the action of $(\mbb{R}^*)^n\rtimes S_n$ on the space $\mcl{S}^{tnz}_{mn}(\mbb{R})$ of strata.
\end{enumerate}
\end{theorem}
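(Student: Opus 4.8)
The plan is to deduce each part as a formal consequence of the structural result already established for the corresponding group, by exhibiting an explicit surjection from the space of strata onto the set of isomorphism classes and showing that its fibres are exactly the group orbits. Once this is set up, Theorem~\ref{theorem:SNAction} proves part (1) and Theorem~\ref{theorem:SDGAction} proves part (2), with essentially no further geometry required.

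For part (1), first I would define a map
\[
\Phi : \mcl{S}^{tnz}_{mn}(\mbb{R}) \lra \{\text{iso.\ classes of generic point arrangements of size } n \text{ in } \mbb{R}^m\}
\]
sending a stratum $\mcl{S}^{tnz}_{\mcl{C}}(\mbb{R})$ to the isomorphism class of the arrangement $S=\{v_1,\ldots,v_n\}$ given by the columns of any representative $GL_m(\mbb{R}).M$ with $M=(v_1,\ldots,v_n)$. The first step is to check that $\Phi$ is well defined, i.e.\ that two representatives of the same stratum give isomorphic arrangements; this is exactly the special case $\mcl{C}=\mcl{D}$ of Theorem~\ref{theorem:SNAction}, since a stratum lies trivially in the same $S_n$-orbit as itself. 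Next, $\Phi$ is surjective: for any generic arrangement $S=\{v_1,\ldots,v_n\}$ the matrix $M=(v_1,\ldots,v_n)$ has all maximal minors nonzero by genericity, so $M\in Mat^{tnz}_{mn}(\mbb{R})$ and $GL_m(\mbb{R}).M$ lies in a stratum mapping to the class of $S$.

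The heart of the argument is the identification of the fibres of $\Phi$ with the $S_n$-orbits, which is precisely Theorem~\ref{theorem:SNAction} read as a biconditional: $\Phi(\mcl{S}^{tnz}_{\mcl{C}}(\mbb{R}))=\Phi(\mcl{S}^{tnz}_{\mcl{D}}(\mbb{R}))$ (the arrangements are isomorphic) if and only if the two strata lie in the same $S_n$-orbit. The $(\La)$ direction shows $\Phi$ separates distinct orbits, giving injectivity of the induced map, while the $(\Ra)$ direction shows $\Phi$ is constant on each orbit, so it descends to the quotient. Hence $\Phi$ factors through a bijection $S_n\bs \mcl{S}^{tnz}_{mn}(\mbb{R}) \os{\sim}{\lra} \{\text{iso.\ classes}\}$, which is (1). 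For part (2) I would run the identical argument with the surjection $\Psi$ sending a stratum to the isomorphism class of the antipodal arrangement $\{\pm v_1,\ldots,\pm v_n\}$ of cardinality $2n$ built from a representative, now invoking Theorem~\ref{theorem:SDGAction} and the group $(\mbb{R}^*)^n\rtimes S_n$ in place of $S_n$: well-definedness is the case $\mcl{C}=\mcl{D}$, surjectivity follows by choosing one vector from each antipodal pair to get a generic half-arrangement and hence a matrix in $Mat^{tnz}_{mn}(\mbb{R})$, and the fibre–orbit identification is again the biconditional of Theorem~\ref{theorem:SDGAction}.

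Since Theorems~\ref{theorem:SNAction} and~\ref{theorem:SDGAction} carry all the geometric content, I expect the genuine points of care to be bookkeeping rather than substantive. The main subtlety to watch for is in part (2): the passage from a half-arrangement of size $n$ to the antipodal arrangement of size $2n$ must be shown insensitive both to the choice of representative from each antipodal pair and to the ordering of the columns. These two ambiguities are exactly absorbed by the sign-flip elements $(\pm 1)^n$ and the permutation factor $S_n$ inside $(\mbb{R}^*)^n\rtimes S_n$, whereas the positive rescalings $(\mbb{R}^+)^n$ act trivially on strata and contribute nothing new. Verifying that this accounting is consistent — that no antipodal isomorphism class is hit twice and none is missed — is the step I would check most carefully, though it is already encoded in the equivalence of Theorem~\ref{theorem:SDGAction}.
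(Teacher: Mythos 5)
Your proposal is correct and takes essentially the same route as the paper: the paper's entire proof is the single sentence that the theorem follows from Theorem~\ref{theorem:SNAction} and Theorem~\ref{theorem:SDGAction}, and your argument just makes explicit the bookkeeping (well-definedness of the map to isomorphism classes, surjectivity via genericity of the column set, and the fibre--orbit identification from the two biconditionals) that the paper's one-line deduction leaves implicit.
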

\begin{proof}
The theorem follows from Theorem~\ref{theorem:SNAction} and Theorem~\ref{theorem:SDGAction}.
\end{proof}
\section{\bf{On the Strata in Dimension Two}}
First we represent any generic point arrangement in $\mbb{R}^2$ combinatorially by capturing all the required geometric data combinatorially.

\subsection{Combinatorial Representation of a Generic Point Arrangement in $\mbb{R}^2$}
\label{sec:CombRepGPA}
Let $S=\{v_1,v_2,\cdots,v_n\}$ be a generic point arrangement in $\mbb{R}^2$. Assume the 
Let $v_i=(x_i,y_i)\in \mbb{R}^2$. Note $v_i\neq 0\in \mbb{R}^2$. Let the angle the line $L_i=\{tv_i\mid t\in \mbb{R}\}$ makes with respect to the positive $X$-axis be $\gth_i$. We note that the lines $L_i,1\leq i\leq n$ are all distinct.  Assume after renumbering the subscripts of the elements of $S$ we have $0\leq \gth_1<\gth_2<\cdots<\gth_n<\gp$. A combinatorial representation of the set $S$ is an element $(a_1,a_2,\cdots,a_n)$ in the set $\{+1,-1\}\times \{+2,-2\}\times \cdots \times \{+n,-n\}$ defined as follows. If $y_1=0$ then $a_1=\sgn(x_1)$ else $a_1=\sgn(y_1)$. We define for $i>1, a_i=\sgn(y_i)i$. We note that for $i>1, y_i\neq 0$. 

\begin{lemma}
If $S=\{v_1,v_2,\cdots,v_n\}$ is a generic point arrangement and the line $L_i=\{tv_i\mid t\in \mbb{R}\}$ makes an angle $\gth_i$ with respect to the positive $X$-axis with $0\leq \gth_1<\gth_2<\cdots<\gth_n<\gp$. Let $(a_1,a_2,\cdots,a_n)\in \us{i=1}{\os{n}{\prod}}\{+i,-i\}$ be the combinatorial representation of the set $S$. Then we have for $1\leq i\neq j\leq n, \sgn(\Det(v_i,v_j))=\sgn(a_ia_j(j-i))$.
\end{lemma}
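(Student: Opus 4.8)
The plan is to reduce the statement to a single trigonometric identity by passing to polar coordinates. First I would write each vector as $v_i = r_i(\cos\psi_i,\sin\psi_i)$ with $r_i = \|v_i\|>0$ (nonzero because $S$ is generic) and $\psi_i\in[0,2\gp)$ the \emph{genuine} direction angle of the vector $v_i$, as opposed to the line angle $\gth_i\in[0,\gp)$. Expanding the determinant in these coordinates gives $\Det(v_i,v_j) = x_iy_j - x_jy_i = r_ir_j\sin(\psi_j-\psi_i)$, so that $\sgn(\Det(v_i,v_j)) = \sgn(\sin(\psi_j-\psi_i))$. This quantity is nonzero exactly because any two vectors of a generic arrangement are linearly independent, which is what makes the signum well defined.

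Next I would connect the sign datum $s_i := \sgn(a_i)$ to the direction $\psi_i$. Since $v_i$ lies on $L_i$, we have $\psi_i\in\{\gth_i,\gth_i+\gp\}$, so write $\psi_i = \gth_i + \gep_i\gp$ with $\gep_i\in\{0,1\}$. For $i>1$ we have $\gth_i\in(0,\gp)$ and $y_i\neq 0$, and $\sgn(y_i) = \sgn(\sin\psi_i) = (-1)^{\gep_i}$; since $a_i = \sgn(y_i)\,i$ by definition, this gives $s_i = (-1)^{\gep_i}$. The only extra case is $i=1$ with $\gth_1 = 0$ (hence $y_1=0$), where the definition falls back on $a_1 = \sgn(x_1)$: here $\psi_1\in\{0,\gp\}$ and $\sgn(x_1) = \sgn(\cos\psi_1) = (-1)^{\gep_1}$, so again $s_1 = (-1)^{\gep_1}$. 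Thus uniformly $s_i = (-1)^{\gep_i}$, and in particular $s_is_j = (-1)^{\gep_i+\gep_j}$.

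The final step is the computation together with an antisymmetry reduction. Swapping $i$ and $j$ negates $\Det(v_i,v_j)$ and negates $\sgn(j-i)$ while leaving $s_is_j$ unchanged, so it suffices to treat $i<j$, where $\sgn(j-i)=+1$ and the claim becomes $\sgn(\Det(v_i,v_j)) = s_is_j$. Now $\psi_j-\psi_i = (\gth_j-\gth_i) + (\gep_j-\gep_i)\gp$, whence $\sin(\psi_j-\psi_i) = (-1)^{\gep_i+\gep_j}\sin(\gth_j-\gth_i)$. Because $0\leq\gth_i<\gth_j<\gp$ forces $0<\gth_j-\gth_i<\gp$, we have $\sin(\gth_j-\gth_i)>0$, and therefore $\sgn(\sin(\psi_j-\psi_i)) = (-1)^{\gep_i+\gep_j} = s_is_j = \sgn(a_ia_j)$. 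Combining this with the antisymmetry observation yields $\sgn(\Det(v_i,v_j)) = \sgn(a_ia_j(j-i))$ for all $1\leq i\neq j\leq n$.

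I do not expect a serious obstacle; the content is elementary once the polar setup is in place. The only points demanding care are the bookkeeping that makes $s_i = (-1)^{\gep_i}$ hold \emph{uniformly} — in particular the degenerate case $\gth_1=0$, where the orientation of $v_1$ must be read from $x_1$ rather than $y_1$ — and the explicit use of genericity, namely that distinct lines give the strict inequalities $0<\gth_j-\gth_i<\gp$ and nonvanishing pairwise determinants, which is precisely what keeps every signum in the argument nonzero.
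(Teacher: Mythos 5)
Your proof is correct and complete. It is worth noting how it relates to the paper: the paper's entire proof of this lemma is the single sentence that it is ``an observation obtained by considering all the various cases,'' i.e.\ an unelaborated case analysis over the possible sign patterns of the coordinates. You replace that by a uniform computation: writing $v_i=r_i(\cos\psi_i,\sin\psi_i)$ turns the determinant into $r_ir_j\sin(\psi_j-\psi_i)$, the sign datum $\sgn(a_i)$ is identified with the parity $(-1)^{\epsilon_i}$ of the half-turn offset in $\psi_i=\theta_i+\epsilon_i\pi$, and then the whole lemma reduces to the single inequality $\sin(\theta_j-\theta_i)>0$ for $i<j$ together with the antisymmetry reduction $\Det(v_j,v_i)=-\Det(v_i,v_j)$, $\sgn(i-j)=-\sgn(j-i)$. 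What your route buys over the paper's: every case is handled at once by one trigonometric identity; the only genuine degeneracy (the case $\theta_1=0$, where $y_1=0$ and $a_1$ must be read from $x_1$) is isolated and checked explicitly rather than buried in an unspecified enumeration; and the precise role of genericity (nonzero vectors, distinct lines, hence strict inequalities $0<\theta_j-\theta_i<\pi$ and nonvanishing determinants, so every signum is well defined) is made visible. In short, your argument supplies the proof the paper only gestures at, and it does so without the combinatorial case explosion the paper's remark implicitly invokes.
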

\begin{proof}
The proof is an observation obtained by considering all the various cases.	
\end{proof}
\begin{defn}
For a matrix $M=(v_1,v_2,\cdots,v_n)\in Mat^{tnz}_{2n}(\mbb{R})$ we define the orientation sign matrix $O_M=[\sgn(\Det(v_i,v_j))]_{1\leq i,j\leq n}$. Let $\mcl{O}_{n}(\mbb{R})=\{O_M\mid M\in Mat^{tnz}_{2n}(\mbb{R}) \}$ be space of orientation sign matrices.
\end{defn}
\begin{remark}
 If $M=(v_1,v_2,\cdots,v_n)\in Mat^{tnz}_{2n}(\mbb{R})$ and $P$ is a permutation matrix associated to the permutation $\gs:[n]\lra [n]$ then $P^{-1}(e^n_i)^t=(e^n_{\gs^{-1}(i)})^t$ and for the matrix $N=(v_{\gs^{-1}(1)},v_{\gs^{-1}(2)},\cdots,v_{\gs^{-1}(n)})=MP^{-1}$ the orientation sign matrix $O_N=[\sgn(\Det(v_{\gs^{-1}(i)},v_{\gs^{-1}(j)}))]_{1\leq i,j\leq n}=PO_MP^{-1}$ (also denoted by $\gs O_M\gs^{-1}$).
\end{remark}
\begin{remark}
If $M=(v_1,v_2,\cdots,v_n)\in Mat^{tnz}_{2n}(\mbb{R})$ then the plucker coordinates $(\Gd_I(M))_{I=\{i,j\}\in \binom{[n]}{2}}$ of $GL_2(\mbb{R}).M\in Gr^{tnz}_{2n}(\mbb{R})$ satisfy that its associated sign vector is given either by the strictly upper triangular entries of the orientation sign matrix $O_M$ or it is given by the strictly lower triangular entries of $O_M$. Moreover we note that if $\gs:[n]\lra [n]$ is the permutation which takes $i$ to $n-i+1, 1\leq i\leq n$ then $-O_M=O_M^t=\gs O_M\gs^{-1}=O_N$ where $N=(v_n,v_{n-1},\cdots,v_2,v_1)$.  
\end{remark}
\begin{remark}
It is therefore easy to observe that the group $\frac{\Z}{2\Z}=\{+1,-1\}$ acts on the set $\mcl{O}_{n}(\mbb{R})$ of orientation sign matrices as $-1\bullet O_M=O_N$ where $M=(v_1,v_2,\cdots,v_n)$, $N=(v_n,v_{n-1},\cdots,v_2,v_1)\in Mat^{tnz}_{2n}(\mbb{R})$ and the set $\mcl{S}^{tnz}_{2n}(\mbb{R})$ is in bijection with the orbit space $\frac{\Z}{2\Z}\bs \mcl{O}_{n}(\mbb{R})$.
\end{remark}

\subsection{Combinatorial Representation of an Element in $Mat^{tnz}_{2n}(\mbb{R})$}
\label{sec:CombRepElement}
Now we extend the definition of a combinatorial representation of a generic point arrangement to a combinatorial representation of an element in $Mat^{tnz}_{2n}(\mbb{R})$.
Let $M=(v_1,v_2,\cdots,v_n)\in Mat^{tnz}_{2n}(\mbb{R})$. Let $S=\{v_i\mid 1\leq i\leq n\}$. Let the combinatorial representation of $S$ be given by the element  $(a_1,a_2,\cdots,a_n)\in \us{i=1}{\os{n}{\prod}}\{+i,-i\}$. Let the line $L_i=\{tv_i\mid t\in \mbb{R}\}$ make an angle $\gth_{\gt(i)}$ with respect to the positive $X$-axis for a permutation $\gt\in S_n$ such that $0\leq \gth_{1}<\gth_{2}<\cdots<\gth_{n}<\gp$.
Then the combinatorial representation of $M$ is a permutation of the coordinates of the element given by $(a_{\gt(1)},a_{\gt(2)},\cdots,a_{\gt(n)})$. This motivates the definition of signed permutation group given in Definition~\ref{defn:SPG}.

\subsection{Enumeration of Strata in Dimension Two}
In this section we enumerate the number of strata in dimension two that is the cardinality of the set $\mcl{S}^{tnz}_{2n}(\mbb{R})$.
\begin{defn}
\label{defn:SPG}
Let $P_n=\{(a_1,a_2,\cdots,a_n)\in \{\pm 1,\pm 2,\cdots,\pm n\}^n\mid \text{ there exists }\gp\in S_n \text{ with }\gp(i)=\sgn(a_i) a_i\}$, be the signed permutation group. We have $P_n\cong S_n\ltimes (\Z/2\Z)^n$ with the isomorphism being \equ{(a_1,a_2,\cdots,a_n)\lra \big(\gp,(\sgn(a_1),\sgn(a_2),\cdots,\sgn(a_n))\big)} where $\gp(i)=\sgn(a_i)a_i$. Let $p=(a_1,a_2,\cdots,$ $a_n)\in P_n$. The signed permutation matrix associated to $p$ is $M=(v_1,v_2,\cdots,v_n)\in GL_n(\mbb{R})$ where $v_i=\sgn(a_i)e^t_{\gp(i)}$ and $\sgn(a_i) a_i =\gp(i)$. The group multiplication in $P_n$ is given by the corresponding matrix multiplication as a subgroup of $GL_n(\mbb{R})$. Let $K_n\cong \frac{\Z}{2n\Z}$ be the cyclic subgroup of order $2n$ generated by the element $(-n,1,\cdots,(n-2),(n-1))\in P_n$.
\end{defn}
\begin{theorem}
\begin{enumerate}
\item The set $\{O_M\mid M\in Mat^{tnz}_{2n}(\mbb{R})\}$ of orientation sign matrices is in bijection with the rights cosets of $K_n$ in $P_n$ and its cardinality is $2^{n-1}(n-1)!$ for $n\geq 2$.
\item The set $\mcl{S}^{tnz}_{2n}(\mbb{R})$ of strata in dimension two which is in bijection with the orbit space $\frac{\Z}{2\Z}\bs \mcl{O}_{n}(\mbb{R})$ has cardinality $2^{n-2}(n-1)!$ for $n\geq 2$.
\end{enumerate}
\end{theorem}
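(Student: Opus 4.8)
The plan is to factor the assignment $M\mapsto O_M$ through the signed permutation group $P_n$ and then identify its fibers with right cosets of $K_n$. First I would, for $M=(v_1,\dots,v_n)\in Mat^{tnz}_{2n}(\mbb{R})$, take the combinatorial representation $p(M)=(b_1,\dots,b_n)\in P_n$ defined in Section~\ref{sec:CombRepElement}. The preceding Lemma gives $\sgn(\Det(v_i,v_j))=\sgn\!\big(b_ib_j(|b_j|-|b_i|)\big)$, so $O_M$ depends only on $p(M)$; hence the formula $\Psi(p)_{ij}=\sgn\!\big(b_ib_j(|b_j|-|b_i|)\big)$ defines a map $\Psi\colon P_n\to\mcl{O}_n(\mbb{R})$ with $\Psi(p(M))=O_M$. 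Any $p\in P_n$ is realized by placing $n$ pairwise non-parallel directions at the prescribed angular ranks with the prescribed up/down orientations, so $M\mapsto p(M)$ is onto $P_n$ and $\Psi$ is onto $\mcl{O}_n(\mbb{R})$. It then suffices to show $\Psi(p)=\Psi(p')$ if and only if $p$ and $p'$ lie in the same right coset $K_n p$.

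For the ``if'' direction I would view $p$ as placing the $2n$ points $\{\pm v_i\}$ on the circle, exactly one from each antipodal pair occupying each of the $n$ angular--rank slots. Rotating the whole arrangement counterclockwise by the minimal step carrying slots to slots cyclically permutes these slots, and reading off its effect on signed labels gives the $2n$-cycle $\gamma\colon k\mapsto k+1\ (k<n),\ n\mapsto -1,\ -k\mapsto -(k+1)\ (k<n),\ -n\mapsto 1$. A short check shows that applying $\gamma$ to every entry of $p$ is exactly left multiplication by $g^{-1}$, where $g$ is the generator of $K_n$. Since rotations preserve each $\Det(v_i,v_j)$, every such rotation fixes $O_M$, so $\Psi$ is constant on the right coset $K_n p=\langle g\rangle p$; moreover a nontrivial rotation moves every slot and so cannot fix a labelled configuration, which confirms that each coset genuinely has $|K_n|=2n$ elements.

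The hard part will be the converse: recovering $K_n p$ from $A=\Psi(p)$ alone. Realizing $A=O_M$ with directions $\Phi_i$, one has $A_{ij}=+1$ exactly when $\Phi_j$ lies in the open counterclockwise half-circle $(\Phi_i,\Phi_i+\pi)$. From $A$ one reconstructs the full counterclockwise cyclic order of the $2n$ points $\{\pm v_i\}$: for fixed $i$, the indices $j$ with $A_{ij}=+1$ (resp.\ $-1$) mark the points $v_j$ (resp.\ $-v_j$) in the half-circle following $v_i$, and their relative order is read from the signs of the corresponding entries of $A$. Because the points come in antipodal pairs, every open half-circle contains precisely $n$ of them, so the antipodal involution shifts this cyclic sequence by exactly $n$ positions; the cyclic sequence thus encodes the same data as a right coset $K_n p$, its $2n$ cyclic starting points being the $2n$ coset elements. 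Hence $A$ determines $K_n p$, so $\Psi(p)=\Psi(p')$ forces $K_n p=K_n p'$.

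Combining the two directions yields a bijection $\mcl{O}_n(\mbb{R})\cong K_n\bs P_n$, whence $|\mcl{O}_n(\mbb{R})|=|P_n|/|K_n|=2^n n!/(2n)=2^{n-1}(n-1)!$ for $n\ge 2$, proving part~(1). For part~(2) I would use the earlier remark that $\frac{\Z}{2\Z}$ acts on $\mcl{O}_n(\mbb{R})$ by $O_M\mapsto -O_M$ (reversing the column order) and that $\mcl{S}^{tnz}_{2n}(\mbb{R})$ is in bijection with this orbit space. For $n\ge 2$ every orientation sign matrix has a nonzero off-diagonal entry, so $O_M=-O_M$ is impossible; the action is therefore free, every orbit has size $2$, and $|\mcl{S}^{tnz}_{2n}(\mbb{R})|=\tfrac12|\mcl{O}_n(\mbb{R})|=2^{n-2}(n-1)!$. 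The reconstruction step above is the crux; matching the rotation step with left multiplication by $g^{-1}$ is the other point that requires care.
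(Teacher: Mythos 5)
Your proposal is correct, and its backbone is the same counting strategy as the paper's: identify the orientation sign matrices with the right cosets of $K_n$ in $P_n$ via combinatorial representations, compute $|P_n|/|K_n|=2^n n!/(2n)=2^{n-1}(n-1)!$ for part (1), and then halve using the $\frac{\Z}{2\Z}$-action for part (2). Where you genuinely diverge is in how the key bijection is established. The paper normalizes: it first shows that the combinatorial representation of $M$ lies in $K_n$ itself if and only if $O_M$ is the standard matrix $O_n$, and then transports this single fiber computation to all fibers using the conjugation-equivariance $O_{MP^{-1}}=PO_MP^{-1}$ together with the observation that every orientation sign matrix is a $P_n$-conjugate of $O_n$. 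You instead prove the two inclusions of a general fiber directly: constancy of $\Psi$ on a right coset $K_np$ via the dictionary between a one-slot rotation of the configuration and left multiplication by $g^{-1}$ (which you correctly verify is the entrywise action of the cycle $\gamma$), and injectivity on cosets by reconstructing the full antipodal cyclic order of $\{\pm v_1,\dots,\pm v_n\}$ from the sign matrix alone. Your reconstruction step is in effect a self-contained expansion of what the paper compresses into ``it is also clear from the arrangement of points''; it costs length but makes the crux checkable, while the paper's equivariance trick is slicker. You are also more careful than the paper in part (2): you explicitly note that the action $O_M\mapsto -O_M$ is free because the off-diagonal entries of $O_M$ are $\pm 1$ for $n\geq 2$, which is exactly what is needed for the orbit space to have cardinality precisely $\tfrac12\,|\mcl{O}_n(\mbb{R})|$; the paper leaves this implicit. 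One minor slip that does not affect the argument: the open half-circle $(\Phi_i,\Phi_i+\pi)$ contains $n-1$ of the $2n$ points (it excludes both $v_i$ and $-v_i$), not $n$; the cyclic order is still fully reconstructed by your procedure.
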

\begin{proof}
Let $M=(v_1,v_2,\cdots,v_n)\in Mat^{tnz}_{2n}(\mbb{R})$ be such that the combinatorial representation of the generic point arrangement $S=\{v_i\mid 1\leq i\leq n\}$ belongs to $K_n$. Then there exists a permutation $P_{\gs}\in S_n$ such that $MP_{\gs}^{-1}=(v_{\gs^{-1}}(1),v_{\gs^{-1}}(2),\cdots,v_{\gs^{-1}}(n))$ and the orientation sign matrix $O_{MP^{-1}_{\gs}}=P_{\gs}O_MP^{-1}_{\gs}$ is the standard matrix $O_n$ given by $O_{MP^{-1}_{\gs}}=O_n=[a_{ij}]_{1\leq i,j\leq n}$ with $a_{ij}=1$ if $i<j$, $a_{ij}=-1$ if $i>j$ and $a_{ij}=0$ if $i=j$. Conversely it is also clear from the arrangement of points $v_i,1\leq i \leq n$ in the plane that, if the orientation matrix of $O_{MP^{-1}_{\gs}}$ is the standard matrix $O_n$ for some permutation $P_{\gs}\in S_n$ then the combinatorial representation of $S$ belongs to $K_n$.

Now we observe that the combinatorial representation itself, of $M$ given by $(a_1,a_2,\cdots,$ $a_n)$ in $P_n$, belongs to $K_n$ if and only if the orientation sign matrix $O_M$ is the standard matrix. So the combinatorial representation of $M$ belongs to the coset $K_np^{-1}$ for $p\in P_n$ if and only if the orientation sign matrix $O_M=PO_nP^{-1}$ where $P$ is the associated signed permutation matrix of $p$ and $O_n$ is the standard orientation sign matrix. Also any orientation sign matrix is a conjugate of $O_n$ by some element $p\in P_n$. Hence $(1)$ follows and the number of right cosets is $\frac{\mid P_n\mid}{\mid K_n\mid} = \frac{2^nn!}{2n}=2^{n-1}(n-1)!$.

Now (2) follows since the orientation sign matrices $O_M$ and $O_M^t=-O_M=O_{RM}=O_N$ correspond to the same stratum using a reflection $R$ of the plane. Here  $N=(v_n,v_{n-1},\cdots,v_1)$. Hence $\mid \frac{\Z}{2\Z}\bs \mcl{O}_{n}(\mbb{R}) \mid =2^{n-2}(n-1)!$. This proves the theorem.
\end{proof} 
\subsection{Enumeration of the Isomorphism Classes of the Antipodal Point Arrangements in the Plane}
The theorem is stated as follows.
\begin{theorem}
\label{theorem:Trasitive}
The group $((\mbb{R}^*)^n\rtimes S_n)$ acts transitively on $\mcl{S}^{tnz}_{2n}(\mbb{R})$. Hence
the orbit space $((\mbb{R}^*)^n\rtimes S_n)\bs\mcl{S}^{tnz}_{2n}(\mbb{R})$ is a singleton set for $n\geq 2$.
\end{theorem}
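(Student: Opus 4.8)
The plan is to exhibit one distinguished stratum, namely the one whose orientation sign matrix is the standard matrix $O_n$ from the previous theorem, and to show that it can be reached from \emph{every} stratum by the action of $(\mbb{R}^*)^n\rtimes S_n$; transitivity and the singleton conclusion then follow immediately. I first recall two facts established above: the positive torus $(\mbb{R}^+)^n$ fixes every stratum, so the effective contribution of $(\mbb{R}^*)^n$ to the action on strata is only through sign changes, i.e. rescaling each column $v_i$ by a factor in $\{+1,-1\}$; and in dimension two a stratum is completely recorded by its orientation sign matrix $O_M$ (up to the reflection $O_M\mapsto -O_M=O_M^t$), with $O_n$ corresponding to the identity coset $K_n$.

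The reduction proceeds in two moves, each realized by a generator of the semidirect product. Starting from any $M=(v_1,v_2,\cdots,v_n)\in Mat^{tnz}_{2n}(\mbb{R})$ representing the given stratum, I would first apply $(\gepv_1,\cdots,\gepv_n)\in(\mbb{R}^*)^n$ with each $\gepv_i\in\{+1,-1\}$ chosen so that $\gepv_i v_i$ lies in the closed upper half-plane, i.e. its angle $\gth_i$ satisfies $0\leq \gth_i<\gp$; since the lines $L_i=\{tv_i\mid t\in\mbb{R}\}$ are pairwise distinct by total nonzeroness, the resulting angles are distinct and at most one of them equals $0$. I would then apply the permutation $\gs\in S_n$ that sorts these angles, producing $N=(w_1,\cdots,w_n)$ whose column angles satisfy $0\leq \psi_1<\psi_2<\cdots<\psi_n<\gp$. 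For $i<j$ one computes $\Det(w_i,w_j)=|w_i|\,|w_j|\sin(\psi_j-\psi_i)$, which is strictly positive because $0<\psi_j-\psi_i<\gp$; hence $O_N=O_n$ and $N$ lies in the distinguished stratum.

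Since the sign change and the sorting permutation together form a single element of $(\mbb{R}^*)^n\rtimes S_n$, and the induced action on strata carries the class of $M$ to the class of $N$, every stratum lies in the orbit of the distinguished one, which gives transitivity and forces the orbit space to be a singleton. Equivalently, by Theorem~\ref{theorem:Enumeration}(2) this asserts that any two antipodal point arrangements of cardinality $2n$ in the plane are isomorphic; as a consistency check, the effective action factors through the signed permutation group $P_n$ of order $2^n n!$, and $2^n n!/(2^{n-2}(n-1)!)=4n$ is a plausible stabilizer order. The only points needing care are that the combined scaling-and-sorting genuinely acts as the composite on strata, the boundary case of the single column possibly lying on the positive $X$-axis, and the sign computation $\sin(\psi_j-\psi_i)>0$; none of these is a real obstacle, so the essential content is simply that sorting columns by angle normalizes all orientation data to $O_n$.
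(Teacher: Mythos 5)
Your proposal is correct and follows essentially the same route as the paper: the paper's proof asserts that, ``clear geometrically,'' any antipodal arrangement can be carried by sign changes and a permutation to the configuration of lines with sorted angles $0\leq \gth_1<\cdots<\gth_n<\gp$ (equivalently, the stratum with standard orientation sign matrix $O_n$), which is exactly your normalization argument. You simply make the paper's geometric claim explicit via the computation $\Det(w_i,w_j)=|w_i|\,|w_j|\sin(\psi_j-\psi_i)>0$, which is a welcome level of detail but not a different method.
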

\begin{proof}
It is clear geometrically that, if there are $n$-antipodal pairs in $\mbb{R}^2$ forming an antipodal point arrangement in the plane then they can be arranged on lines $L_1,L_2,\cdots,L_n$ passing through origin making angles $\gth_i,1\leq i\leq n$ with respect to the positive $X$-axis such that $0\leq \gth_1<\gth_2<\cdots<\gth_n<\gp$. Hence the group $((\mbb{R}^*)^n\rtimes S_n)$ acts transitively on $\mcl{S}^{tnz}_{2n}(\mbb{R})$. Also group theoretically, we see that the signed permutation group $P_n$ acts transitively on the set $\{O_M\mid M\in Mat^{tnz}_{2n}(\mbb{R})\}$ of orientation sign matrices by conjugation. Now the theorem follows. 
\end{proof}
\subsection{Enumeration of the Isomorphism Classes of the Generic Point Arrangements in the Plane}

In this section we enumerate the cardinality of the space of orbits $S_n\bs\mcl{S}^{tnz}_{2n}(\mbb{R})$ under the action of $S_n$ on the space of strata $\mcl{S}^{tnz}_{2n}(\mbb{R})$.

\begin{theorem}
\label{theorem:FixPoints}
Let $n$ be a positive integer and $\gz$ be a generator of the cyclic group $\frac {\Z}{2n\Z}$ of order $2n$. Consider the following action of $\frac{\Z}{2n\Z}$ on the set $(\frac{\Z}{2\Z})^n=\{+1,-1\}^n$. The generator $\gz$ acts as:
\equ{\gz\bullet (d_1,d_2,\cdots,d_n)=(-d_n,d_1,d_2,\cdots,d_{n-1}).}
Let $0\leq i\leq 2n-1$ where $n=2^lm\in \mbb{N}, 2\nmid m,l\in \mbb{N}\cup\{0\}$. Then \equ{\mid\{d\in \{+1,-1\}^n\mid \gz^i\bullet d=d\}\mid =
	\begin{cases}
		2^n \text{ if }i=0,\\
		2^{gcd(i,n)} \text{ if }i=2^{l+1}j,\\
		0\text{ otherwise}.
	\end{cases}
	}
\end{theorem}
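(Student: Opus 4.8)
The plan is to linearize the action by regarding $d=(d_1,\dots,d_n)$ as a bi-infinite sign sequence. First I would extend any $d\in\{+1,-1\}^n$ to a sequence $e\colon\Z\to\{+1,-1\}$ by setting $e_j=d_j$ for $1\le j\le n$ and imposing the anti-periodicity relation $e_{j+n}=-e_j$ for all $j$; this forces the honest period $e_{j+2n}=e_j$. A direct check shows that $\gz$ acts as a shift, $(\gz\bullet d)_j=e_{j-1}$, so that $(\gz^i\bullet d)_j=e_{j-i}$; in particular $\gz^n\bullet d=-d$, confirming that $\gz$ has order $2n$. Consequently $\gz^i\bullet d=d$ holds if and only if the extended sequence $e$ is periodic with period $i$, in addition to its built-in anti-period $n$.

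Next I would pass to the finite cyclic picture. Since every such $e$ already has period $2n$, I work in $\Z/2n\Z$ and set $g=\gcd(i,2n)$. The period-$i$ condition says $e$ is invariant under the subgroup $\langle i\rangle=\langle g\rangle\subseteq\Z/2n\Z$, hence descends to a function $\bar e\colon\Z/g\Z\to\{+1,-1\}$, and the anti-periodicity $e_{j+n}=-e_j$ becomes $\bar e_{s+\bar n}=-\bar e_s$ for all $s$, where $\bar n\equiv n\pmod g$. Thus the fixed-point count equals the number of functions on $\Z/g\Z$ that reverse sign under translation by $\bar n$. To count these, I would decompose $\Z/g\Z$ into the orbits of the translation $s\mapsto s+\bar n$. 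Each orbit has size $o=g/\gcd(n,g)$, and along an orbit the sign-reversal relation forces $\bar e$ to alternate; going once around the orbit this is consistent precisely when $o$ is even, in which case the orbit contributes a single free binary choice, whereas for $o$ odd there are no solutions.

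Since the number of orbits is $g/o=\gcd(n,g)$, and the elementary identity $\gcd\bigl(n,\gcd(i,2n)\bigr)=\gcd(i,n)$ holds, the count is $2^{\gcd(i,n)}$ when $o$ is even and $0$ otherwise; the degenerate case $i=0$ gives $g=2n$, $o=2$, and the value $2^{\gcd(0,n)}=2^n$, consistent with the stated formula. It then remains to determine the parity of $o=\gcd(i,2n)/\gcd(i,n)$ in terms of $n=2^l m$ with $m$ odd. Writing $v_2$ for the $2$-adic valuation, $v_2(o)=\min\bigl(v_2(i),l+1\bigr)-\min\bigl(v_2(i),l\bigr)$, which equals $1$ exactly when $v_2(i)\ge l+1$, i.e.\ when $2^{l+1}\mid i$ (the case $i=2^{l+1}j$), and equals $0$ otherwise. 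This yields the trichotomy in the statement.

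The only genuine subtlety, and the step I would treat most carefully, is the orbit-alternation argument of the third paragraph together with the arithmetic identity $\gcd\bigl(n,\gcd(i,2n)\bigr)=\gcd(i,n)$ and the final valuation computation; everything else — the verification that $\gz$ is a shift and the descent to $\Z/g\Z$ — is routine bookkeeping.
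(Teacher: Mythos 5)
Your proof is correct, and it takes a genuinely different route from the paper's. The paper argues by induction on the arithmetic structure of $n=2^lm$ and $i$: it first computes the fixed points of $\gz^2$ directly when $l=0$ and $\gcd(j,m)=1$ (the two alternating tuples), transfers this to $\gz^{2j}$ by inverting $j$ modulo $m$, and then handles all remaining cases by de-interleaving $(d_1,\dots,d_n)$ into $2$ (respectively $t=\gcd(j,m)$) subsequences, each fixed by a smaller power of the corresponding signed shift on $\{+1,-1\}^{n'}$, so that the count for $(n,i)$ is the square (respectively $t$-th power) of the count for $(n',i')$; the vanishing cases reduce the same way to the directly checked contradiction $d_1=-d_1$. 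You instead linearize once and for all: the anti-periodic extension turns $\gz$ into the shift, the fixed-point equation into $i$-periodicity, and the whole count into the number of functions on $\Z/g\Z$, $g=\gcd(i,2n)$, that reverse sign under translation by $n\bmod g$; the orbit-alternation argument, the identity $\gcd(n,\gcd(i,2n))=\gcd(i,n)$, and the $2$-adic computation of $o=\gcd(i,2n)/\gcd(i,n)$ then yield the trichotomy uniformly (including $i=0$), with no induction and no case split over divisors. Your single mechanism --- sign-reversing functions on a quotient cycle --- simultaneously explains the vanishing (odd orbit length forces $\bar{e}_s=-\bar{e}_s$) and the exponent $\gcd(i,n)$ (one free sign per orbit), whereas the paper's argument stays entirely inside finite tuples at the cost of several parallel inductive cases. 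The steps you flag as the real content do hold: the correspondence with functions on $\Z/g\Z$ is a bijection (pull back along $\Z\to\Z/g\Z$, note that the pullback is automatically $n$-anti-periodic and $i$-periodic, and restrict to coordinates $1,\dots,n$); $\gcd(n,\gcd(i,2n))=\gcd(i,n)$ because $\gcd$ is associative and $\gcd(n,2n)=n$; and since $\gcd(i,2n)$ and $\gcd(i,n)$ have the same odd part, $v_2(o)=\min(v_2(i),l+1)-\min(v_2(i),l)$, which equals $1$ exactly when $2^{l+1}\mid i$ and $0$ otherwise, as you claim.
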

\begin{proof}
Clearly the theorem holds for $n=1,2$. 
	
	For any $n\in \N$, if $i=0$ then clearly 
	\equ{\mid\{d\in \{+1,-1\}^n\mid \gz^i\bullet d=d\}\mid =2^n.}
	Let $n=2^lm,2\nmid m\in \N,l\in \N\cup\{0\}$. Let $i=2^{l+1}j,gcd(j,m)=1$. 
	Suppose $l=0,n=m\geq 3$. Then $(d_1,d_2,\cdots,d_m)\in \{+1,-1\}^m$ is a fixed point of $\gz^2$ if and only if 
	\equ{(-d_{m-1},-d_m,d_1,d_2,\cdots,d_{m-2})=(d_1,d_2,\cdots,d_m).}
	So either $d_i=(-1)^i,1\leq i\leq m$ or $d_i=(-1)^{i+1},1\leq i\leq m$. Hence there are only two fixed points for $\gz^2$. Now \equ{\gz^2\bullet d=d\Ra \gz^{2j}\bullet d=d} and if $j'\in \{1,2,\cdots,m-1\}$ such that $jj'\equiv 1\mod m$ then $2jj'\equiv 2\mod 2m$ and \equ{\gz^{2j}\bullet d=d\Ra (\gz^{2j})^{j'}\bullet d=d\Ra \gz^2\bullet d=d.}
	Hence we have 
	\equ{\mid\{d\in \{+1,-1\}^{n}\mid \gz^{2j}\bullet d=d\}\mid=\mid\{d\in \{+1,-1\}^{n}\mid \gz^{2}\bullet d=d\}\mid=2=2^{gcd(i,n)}.} 
	
	Let $n=2^lm,2\nmid m\in \N,l\in \N\cup\{0\}$. Let $i=2^{l+1}j,gcd(j,m)=1$. Suppose $l\geq 1$. Then consider $n'=\frac n2,i'=\frac i2$. So $i=2i',n=2n'$. We have by induction, using the theorem for values smaller than $l$
	\equ{\mid\{d\in \{+1,-1\}^{n'}\mid \gz^{i'}\bullet d=d\}\mid=2^{gcd(i',n')}=2^{2^{l-1}}.} 
	Let $(d_1,e_1,d_2,e_2,\cdots,d_{n'},e_{n'})\in \{+1,-1\}^n$. We observe that 
	\equ{\gz^{2i'}\bullet (d_1,e_1,d_2,e_2,\cdots,d_{n'},e_{n'})=(d_1,e_1,d_2,e_2,\cdots,d_{n'},e_{n'})}
	if and only if for $<\gt>=\frac{\Z}{2n'\Z},\gt\bullet (d_1,d_2,\cdots,d_{n'})=(-d_{n'},d_1,\cdots,d_{n'-1})$ we have 
	\equ{\gt^{i'}\bullet (d_1,d_2,\cdots,d_{n'})=(d_1,d_2,\cdots,d_{n'}) \text{ and }\gt^{i'}\bullet (e_1,e_2,\cdots,e_{n'})=(e_1,e_2,\cdots,e_{n'}).}
	Hence \equ{\mid\{d\in \{+1,-1\}^n\mid \gz^i\bullet d=d\}\mid=\mid\{d\in \{+1,-1\}^{n'}\mid \gt^{i'}\bullet d=d\}\mid^2=2^{2^l}=2^{gcd(i,n)}.}
	Let $n=2^lm,2\nmid m\in \N,l\in \N\cup\{0\}$. Let $i=2^{l+1}j,gcd(j,m)=t>1$. Then consider $n'=2^l\frac mt,i'=2^{l+1}\frac jt$. So $i=ti',n=tn'$. We have by induction, using the theorem for values smaller than $n$,
	\equ{\mid\{d\in \{+1,-1\}^{n'}\mid \gz^{i'}\bullet d=d\}\mid=2^{gcd(i',n')}=2^{2^l}.} 
	Let $(d_{11},d_{21},\cdots,d_{t1},d_{12},d_{22},\cdots,d_{t2},\cdots,d_{1n'},d_{2n'},\cdots,d_{tn'})\in \{+1,-1\}^{tn'}$. We observe that \equa{\gz^{ti'}\bullet (d_{11},d_{21},&\cdots,d_{t1},d_{12},d_{22},\cdots,d_{t2},\cdots,d_{1n'},d_{2n'},\cdots,d_{tn'})\\&=(d_{11},d_{21},\cdots,d_{t1},d_{12},d_{22},\cdots,d_{t2},\cdots,d_{1n'},d_{2n'},\cdots,d_{tn'})} if and only if \equ{\gt^{i'}\bullet (d_{j1},d_{j2},\cdots,d_{jn'})=(d_{j1},d_{j2},\cdots,d_{jn'})\text{ for }1\leq j\leq t.}
	So we have
	\equ{\mid\{d\in \{+1,-1\}^{n}\mid \gz^{i}\bullet d=d\}\mid=\mid\{d\in \{+1,-1\}^{n'}\mid \gt^{i'}\bullet d=d\}\mid^t=2^{2^lt}=2^{gcd(i,n)}.} 
	
	Now let $n=2^lm,2\nmid m\in \N,l\in \N\cup\{0\}$. Let $i=2^kj,0\leq k\leq l,2\nmid j$.
	Here let $k=0$, that is, $i$ is odd and suppose $gcd(i,n)=1\Ra gcd(i,2n)=1$. If $(d_1,d_2,\cdots,d_n)$ is a fixed point of $\gz$ then we have $(d_1,d_2,\cdots,d_n)=(-d_n,d_1,d_2,$ $\cdots,d_{n-1})\Ra d_1=-d_1$ which is impossible. Hence there are no fixed points of $\gz$. Now let $1\leq i'\leq 2n-1$ such that $ii'\equiv 1 \mod 2n$. Then $\gz^i\bullet d=d\Ra (\gz^i)^{i'}\bullet d=d\Ra \gz\bullet d=d$ which is impossible. Hence there are no fixed points of $\gz^i$. 
	
	Now let $n=2^lm,2\nmid m\in \N,l\in \N\cup\{0\}$. Let $i=2^kj,0\leq k\leq l,2\nmid j$.
	Here again let $k=0$, that is, $i$ is odd and suppose $1<gcd(i,n)=t\Ra gcd(i,2n)=t$. 
	Then we consider $n'=\frac nt,i'=\frac it$ and apply a similar argument as before, to conclude that there are no fixed points for $\gz^i$ as there are no fixed points for $\gt^{i'}$.
	
	Now let $n=2^lm,2\nmid m\in \N,l\in \N\cup\{0\}$. Let $i=2^kj,0\leq k\leq l,2\nmid j$.
	Here let $k\geq 1$. Then we consider $n'=\frac n2,i'=\frac i2$ and apply a similar argument as before to conclude that there are fixed points for $\gz^i$ as there are no fixed points for $\gt^{i'}$.
	Hence Theorem~\ref{theorem:FixPoints} follows.
	
\end{proof}
\begin{theorem}
The cardinality of the orbit space $S_n\bs\mcl{S}^{tnz}_{2n}(\mbb{R})$ is given by 
\equ{\frac1{2n}\us{k\mid n,2\nmid k}{\sum}\gf(k)2^{\frac nk},}
 for $n\geq 2$ where $\gf$ is the Euler-Totient function. 
\end{theorem}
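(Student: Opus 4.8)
The plan is to count these orbits with the Cauchy--Frobenius (Burnside) lemma applied to the cyclic group $\frac{\Z}{2n\Z}=\langle\gz\rangle$ acting on $\{+1,-1\}^n$, and then to substitute the fixed-point counts supplied by Theorem~\ref{theorem:FixPoints}.

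First I would reduce the orbit count on strata to an orbit count on sign vectors. Recall that $\mcl{S}^{tnz}_{2n}(\mbb{R})$ is in bijection with $\frac{\Z}{2\Z}\bs\mcl{O}_{n}(\mbb{R})$, that $\mcl{O}_{n}(\mbb{R})$ is in bijection with the cosets of $K_n$ in $P_n$, and that $P_n\cong S_n\ltimes(\Z/2\Z)^n$ with $K_n\cong\frac{\Z}{2n\Z}$ generated by $\gz$. The order-two symmetry used to pass from $\mcl{O}_{n}(\mbb{R})$ to the strata is conjugation by the reversal $v_i\mapsto v_{n+1-i}$, which is realized by a permutation lying in $S_n$; consequently the $S_n$-orbits on $\mcl{S}^{tnz}_{2n}(\mbb{R})$ coincide with the $S_n$-orbits on $\mcl{O}_{n}(\mbb{R})$. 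Identifying the right cosets $S_n\bs P_n$ with $(\Z/2\Z)^n=\{+1,-1\}^n$ (the sign part of a signed permutation once its underlying permutation is normalized), the $S_n$-orbits on $\mcl{O}_{n}(\mbb{R})$ become the double cosets $S_n\bs P_n\fs K_n$, which are the orbits of right translation by $K_n$ on $S_n\bs P_n$; and the generator $\gz$ acts there exactly as $\gz\bullet(d_1,\ldots,d_n)=(-d_n,d_1,\ldots,d_{n-1})$. Thus the $S_n$-orbits on strata are precisely the orbits of $\langle\gz\rangle$ on $\{+1,-1\}^n$.

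Granting this identification, Burnside's lemma gives
\[
|S_n\bs\mcl{S}^{tnz}_{2n}(\mbb{R})|=\frac{1}{2n}\sum_{i=0}^{2n-1}\big|\{d\in\{+1,-1\}^n\mid \gz^i\bullet d=d\}\big|.
\]
Writing $n=2^lm$ with $2\nmid m$, Theorem~\ref{theorem:FixPoints} shows the summand vanishes unless $i=2^{l+1}j$ for some $0\le j\le m-1$, in which case it equals $2^{\gcd(2^{l+1}j,\,n)}$ (with the convention $\gcd(0,n)=n$ for $i=0$). Since $m$ is odd, $\gcd(2^{l+1}j,2^lm)=2^l\gcd(j,m)$, so the sum reduces to $\sum_{j=0}^{m-1}2^{2^l\gcd(j,m)}$. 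Grouping the indices $j$ according to the value $e=\gcd(j,m)$, a divisor of $m$, there are exactly $\gf(m/e)$ such $j$, whence the sum equals $\sum_{e\mid m}\gf(m/e)\,2^{2^le}$. Reindexing by $k=m/e$, using that $2^le=n/k$ and that the odd divisors of $n$ are precisely the divisors of $m$, I obtain $\sum_{k\mid n,\,2\nmid k}\gf(k)\,2^{n/k}$, which yields the claimed value after dividing by $2n$.

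The main obstacle is the first step: one must verify carefully that, under the chain of bijections above, the $S_n$-conjugation action on $\mcl{O}_{n}(\mbb{R})$ is carried to the $\gz$-action of Theorem~\ref{theorem:FixPoints} on $\{+1,-1\}^n$, and that the order-two symmetry defining the strata is genuinely absorbed into $S_n$ rather than introducing a further quotient that would alter the count. The arithmetic simplification in the last step is then routine, resting only on the multiplicity count $\#\{0\le j<m:\gcd(j,m)=e\}=\gf(m/e)$ and on the identification of the odd divisors of $n$ with the divisors of $m$.
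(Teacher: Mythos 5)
Your proposal is correct, and it reaches the paper's pivotal intermediate identity by a genuinely different route. The paper applies Burnside's lemma to $S_n$ acting on the \emph{left} cosets $P_n/K_n$: it computes $\Stab(pK_n)=S_n\cap pK_np^{-1}$, reduces this via the decomposition $p=qd$ to $\mid dS_nd\cap K_n\mid$, and then works out explicit sign equations (Equations~\ref{Eq:T1},~\ref{Eq:T2}) showing that $\mid dS_nd\cap K_n\mid$ equals the number of exponents $i\in\{0,1,\cdots,2n-1\}$ with $\gz^i\bullet d=d$; interchanging the sums over $d$ and $i$ then produces $\frac{1}{2n}\us{i=0}{\os{2n-1}{\sum}}\mid\{d\in\{+1,-1\}^n\mid \gz^i\bullet d=d\}\mid$. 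You instead invoke the double-coset duality: the $S_n$-orbits on $P_n/K_n$ are the double cosets $S_n\bs P_n/K_n$, i.e.\ the orbits of $K_n\cong\frac{\Z}{2n\Z}$ acting by right translation on $S_n\bs P_n\cong D_n\cong\{+1,-1\}^n$, so Burnside applied to the \emph{cyclic} group yields the same fixed-point sum in one stroke, with no stabilizer analysis. From that point on the two arguments coincide (Theorem~\ref{theorem:FixPoints} plus the totient bookkeeping; your handling of $i=0$ by the convention $\gcd(0,n)=n$ and the reindexing $k=m/e$ is correct and, if anything, tidier than the paper's). The single step you flag but leave unverified --- that right translation by the generator $(-n,1,2,\cdots,(n-1))$ of $K_n$ acts on sign vectors as $\gz\bullet(d_1,\cdots,d_n)=(-d_n,d_1,\cdots,d_{n-1})$ --- is true and is a short computation: if $\gk$ denotes the generator's signed permutation matrix and $d=\mathrm{Diag}(d_1,\cdots,d_n)$, then $d\gk$ has columns $-d_ne_n,\ d_1e_1,\cdots,d_{n-1}e_{n-1}$, hence $d\gk=q\,\mathrm{Diag}(-d_n,d_1,\cdots,d_{n-1})$ with $q$ the matrix of the $n$-cycle, which is exactly the claim after passing to $S_n\bs P_n$. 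This verification is the one-generator shadow of the paper's Equations~\ref{Eq:T1},~\ref{Eq:T2}, so your route does not evade the computational content but concentrates it in one easy place; what it buys is a cleaner conceptual picture, whereas the paper's stabilizer computation makes the sign conditions explicit for every power of the generator. Your opening reduction --- absorbing the order-two identification $O_M\sim -O_M=O_M^t$ into $S_n$ via the reversal permutation, so that $S_n$-orbits on strata equal $S_n$-orbits on $\mcl{O}_{n}(\mbb{R})$ --- is the same as the paper's first observation.
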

\begin{proof}
First we observe that $O_M$ is conjugate to $O_M^t=-O_M=O_N$ where $N=MP^{-1}$ with $P$ a permutation matrix in $S_n$ such that $Pe^n_i=e^n_{n-i+1}, 1\leq i\leq n$.
So it is enough to count the number of orbits for the action of $S_n$ on the set $\{O_M\mid M\in Mat^{tnz}_{2n}(\mbb{R})\}$ of orientation sign matrices by conjugation.

Now we observe that this action is isomorphic to the restricted action of $P_n$ to $S_n$ on the space of left cosets of $K_n\subs P_n$, that is, the map $\gf:\{pK_n\mid p\in P_n\}\lra \{O_M\mid M\in Mat^{tnz}_{2n}(\mbb{R})\}=\{pO_np^{-1}\mid p\in P_n, O_n$ is the standard orientation sign matrix$\}$ taking $pK_n$ to $pO_np^{-1}$ is a well defined map and an isomorphism of the $P_n$-sets. Also the set of combinatorial representations (a subset of $P_n$) of elements in $Mat_{2n}^{tnz}(\mbb{R})$ which give orientation sign matrix $pO_np^{-1}$ is precisely the right coset $K_np^{-1}$.

The cardinality of the orbit space $S_n\bs\mcl{S}^{tnz}_{2n}(\mbb{R})$ is therefore given by 
\equ{\frac{1}{n!}\bigg(\us{C \text{ a left coset of }K_n}{\sum}\mid \Stab(C)\mid\bigg).}
If $C=pK_n$ then $\Stab(C)=S_n\cap pK_np^{-1}$. Now let us find a system of distinct coset representatives of $K_n$ in $P_n$. A system of complete left coset representatives is precisely given by 
\equ{\{(1,a_2,a_3,\cdots,a_n)\mid (a_2,a_3,\cdots,a_n) \text{ is a signed permutation of }2,3,\cdots,n \}.} This set also has the right cardinality $2^{n-1}(n-1)!=\frac{\mid P_n\mid }{\mid K_n\mid}$.
Let $D_n$ be the subgroup of $P_n$ corresponding to the diagonal matrices in the group of signed permutation matrices. Then we have $D_n\cong (\frac{\Z}{2\Z})^n=\{+1,-1\}^n$ and $P_n\cong S_n \ltimes D_n$ where $S_n$ is the subgroup of $P_n$ corresponding to permutation matrices. If $p=(a_1,a_2,\cdots,a_n)\in P_n$ the matrix associated to $p$ is $P$ where it is given by \equa{P&=(\sgn(a_1)e^n_{\sgn(a_1)a_1},\sgn(a_2)e^n_{\sgn(a_2)a_2},\cdots,\sgn(a_n)e^n_{\sgn(a_n)a_n})\\&=(e^n_{\sgn(a_1)a_1},e^n_{\sgn(a_2)a_2},\cdots,e^n_{\sgn(a_n)a_n}).Diag(\sgn(a_1),\sgn(a_2),\cdots,\sgn(a_n))\\&=QD\text{ where }Q \text{ is a permutation matrix and }D\text{ is a diagonal matrix}.} 
In the group $P_n$ this multiplication is expressed as 
\equa{p&=(a_1,a_2,\cdots,a_n)\\&=(\sgn(a_1)a_1,\sgn(a_2)a_2,\cdots,\sgn(a_n)a_n).(\sgn(a_1)1,\sgn(a_2)2,\cdots,\sgn(a_n)n)\\&=qd\text{ where }q\in S_n,d\in D_n.} 
So $p=qd\Ra \mid\Stab(pK_n)\mid=\mid S_n\cap pK_np^{-1}\mid =\mid p^{-1}S_np\cap K_n\mid =\mid d^{-1}S_nd\cap K_n\mid=\mid dS_nd\cap K_n\mid$ since $d^{-1}=d$.
So we have
\equa{&\frac{1}{n!}\bigg(\us{C \text{ a left coset of }K_n}{\sum}\mid \Stab(C)\mid\bigg)=\\&\frac {(n-1)!}{n!}\bigg(\us{(d_2,d_3,\cdots,d_n)\in (\frac{\Z}{2\Z})^{n-1}}{\sum}\mid (1,2d_2,3d_3,\cdots,nd_n)S_n(1,2d_2,3d_3,\cdots,nd_n)\cap K_n\mid\bigg)\\
	&=\frac 1{2n}\bigg(\us{d\in D_n}{\sum}\mid dS_nd\cap K_n\mid\bigg).} 
Now let $\psi:P_n\lra S_n$ be the surjective map with kernel $D_n$ given by 
\equ{\psi(a_1,a_2,\cdots,a_n)=(\sgn(a_1)a_1,\sgn(a_2)a_2,\cdots,\sgn(a_n)a_n).} 
Then $\psi(K_n)=\langle(n,1,2,\cdots,(n-1)) \rangle$ is a cyclic subgroup order $n$ generated by the $n$-cycle $(123\cdots n)$ in cycle notation. We have $\psi(q)=q$ for $q\in S_n,\psi(d)=\text{ identity for }d\in D_n$.
For $q\in S_n,dqd\in K_n\Ra q=\psi(q)=\psi(dqd)=(n,1,\cdots,(n-1))^i=(n-i+1,n-i+2,\cdots,n-1,n,1,2,\cdots,n-i-1,n-i)$ for some $0\leq i\leq n-1$.

Let $q=(a_1,a_2,\cdots,a_n)\in S_n,d=(1d_1,2d_2,\cdots,nd_n)\in D_n$ where $(d_1,d_2,\cdots,d_n)\in (\Z/2\Z)^n=\{+1,-1\}^n$ then we have 
\equ{dqd=(a_1d_1d_{a_1},a_2d_2d_{a_2},\cdots,a_nd_nd_{a_n}).}
So we have
\equa{&d(n,1,\cdots,(n-1))^id=\\
	&((n-i+1)d_1d_{n-i+1},(n-i+2)d_2d_{n-i+2},\cdots,(n-1)d_{i-1}d_{n-1},nd_id_n,\\&1d_{i+1}d_1,2d_{i+2}d_2,\cdots,(n-i-1)d_{n-1}d_{n-i-1},(n-i)d_nd_{n-i})}
Hence for $0\leq i\leq n-1, d(n,1,\cdots,(n-1))^id\in K_n$ then either the following holds:

\equan{T1}{&d(n,1,\cdots,(n-1))^id=(-n,1,2,\cdots,(n-1))^i\text{ and }\\d_j&=-d_{n-i+j}\text{ for } 1\leq j\leq i\text{ and }d_{i+j}=d_j\text{ for }1\leq j\leq n-i}, 

or the following holds:

\equan{T2}{&d(n,1,\cdots,(n-1))^id=(-n,1,2,\cdots,(n-1))^{n+i}\text{ and }\\d_j&=d_{n-i+j}\text{ for } 1\leq j\leq i\text{ and }d_{i+j}=-d_j\text{ for }1\leq j\leq n-i.} 

Hence we consider the action of the cyclic group $\frac{\Z}{2n\Z}=<\gz>$ on the group $(\Z/2\Z)^n=\{+1,-1\}^n$ as follows. The action of the generator $\gz$ is given by
\equ{\gz\bullet(d_1,d_2,\cdots,d_n)=(-d_n,d_1,d_2,\cdots,d_{n-1}).}

In terms of this group action Equations~\ref{Eq:T1} imply \equ{\gz^i\bullet (d_1,d_2,\cdots,d_n)=(d_1,d_2,\cdots,d_n).}

In terms of this group action Equations~\ref{Eq:T2} imply \equ{\gz^{n+i}\bullet (d_1,d_2,\cdots,d_n)=(d_1,d_2,\cdots,d_n).}
So we have 
\equa{\frac{1}{n!}\bigg(\us{C \text{ a left coset of }K_n}{\sum}\mid \Stab(C)\mid\bigg)
	&=\frac 1{2n}\bigg(\us{d\in D_n}{\sum}\mid dS_nd\cap K_n\mid\bigg)\\
	&=\frac1{2n}\bigg(\us{d\in \{+1,-1\}^n}{\sum}\mid \{i\mid \gz^i\bullet d=d,0\leq i\leq 2n-1\}\mid\bigg)\\
&=\frac 1{2n}\bigg(\us{i=0}{\os{2n-1}{\sum}}\mid \{d\in \{+1,-1\}^n\mid \gz^i\bullet d=d\}\mid\bigg).}

We prove the theorem using Theorem~\ref{theorem:FixPoints}. Now for an integer $j$ we have $1\leq j\leq m-1 \Llra 1\leq 2^{l+1}j< 2^{l+1}m=2n$. Let $k\mid m,k\neq 1$. The cardinality of the set $\{j\mid 1\leq j\leq m-1, gcd(j,m)=\frac mk\}$ is exactly $\gf(k)$ where $\gf$ is the Euler-totient function. Hence the cardinality of the set $\{i\mid 1\leq i\leq 2n-1,gcd(i,n)=\frac nk,i=2^{l+1}j\}$ is $\gf(k)$. So we have

\equa{\frac{1}{n!}\bigg(\us{C \text{ a left coset of }K_n}{\sum}\mid \Stab(C)\mid\bigg)
	&=\frac 1{2n}\bigg(\us{i=0}{\os{2n-1}{\sum}}\mid \{d\in \{+1,-1\}^n\mid \gz^i\bullet d=d\}\mid\bigg)\\&=\frac 1{2n}\bigg(2^n+\us{k\mid m,k\neq 1}{\sum}\gf(k)2^{\frac nk}\bigg)\\&=\frac1{2n}\us{k\mid n,2\nmid n}{\sum}\gf(k)2^{\frac nk}.}

This completes the proof of the theorem.
\end{proof}
\begin{remark}
The initial values for $2\leq n\leq 10$ are given as $1,2,2,4,6,10,16,30,52$. Also refer OEIS Sloane sequence A000016~\cite{A000016}.
\end{remark}
\section{\bf{Open Questions: Enumeration of the Generic Point Arrangements, the Antipodal Point Arrangements in Higher Dimensions}}
Theorem~\ref{theorem:Enumeration} gives parameter spaces for the isomorphism classes of the generic point arrangements of cardinality $n$ and the isomorphism classes of the antipodal point arrangements of cardinality $2n$ in the Euclidean space $\mbb{R}^m$. In Sections~\ref{sec:CombRepGPA},~\ref{sec:CombRepElement}, we have obtained combinatorial representations of a generic point arrangement of cardinality $n$ in the plane and of an element in $Mat^{tnz}_{2n}(\mbb{R})$ which lead to the enumeration of parameter spaces for $m=2$. We also have observed that there is a single isomorphism class for the antipodal point arrangements of cardinality $n$ in the plane in Theorem~\ref{theorem:Trasitive}.

The analogous enumeration questions about the cardinalities of these parameter spaces  $((\mbb{R}^*)^n\rtimes S_n)\bs\mcl{S}^{tnz}_{mn}(\mbb{R})$ and $S_n\bs\mcl{S}^{tnz}_{mn}(\mbb{R})$ are still open for $m\geq 3$. The enumeration of the space $\mcl{S}^{tnz}_{mn}(\mbb{R})$ of strata for the totally nonzero Grassmannian is also open for $m\geq 3$. It is known that there is more than one isomorphism class for the antipodal point arrangements in the space $\mbb{R}^3$ of cardinality $2n$ for $n=6$ (see C.~P.~Anil Kumar~\cite{IJPA1},~\cite{IJPA2}) even though it is not a difficult exercise to show that there is a single isomorphism class of the antipodal point arrangements in the space $\mbb{R}^3$ of cardinality $2n$ for each $n=3,4,5$. The antipodal point arrangements in general are combinatorially classified in~\cite{IJPA2}. 

\end{document}